\documentclass[11pt]{amsart}
\usepackage{amsmath, amsfonts, amssymb, amsbsy, graphicx,  upref}

\newtheorem{thm}{Theorem}[section]
\newtheorem{lmm}[thm]{Lemma}

\newtheorem{prop}[thm]{Proposition}

\newcommand{\cov}{\mathrm{Cov}}

\newcommand{\ee}{\mathbb{E}}

\newcommand{\mf}{\mathcal{F}}

\newcommand{\pp}{\mathbb{P}}

\newcommand{\ra}{\rightarrow}
\newcommand{\rr}{\mathbb{R}}

\newcommand{\var}{\mathrm{Var}}
\newcommand{\ve}{\varepsilon}

\newcommand{\zz}{\mathbb{Z}}

\usepackage{ifpdf}
\ifpdf%
  \usepackage{pdftricks}
  \begin{psinputs}
    \usepackage{pstricks}
  \end{psinputs}
\else
  \usepackage{pstricks}
  \newenvironment{pdfpic}{}{}
\fi

\newcommand{\ep}{\epsilon}

\usepackage{hyperref}

\begin{document}
\title[Scaling exponents in first-passage percolation]{The universal relation between scaling exponents in first-passage percolation}
\author{Sourav Chatterjee}
\address{Courant Institute of Mathematical Sciences, New York University, 251 Mercer Street, New York, NY 10012}
\keywords{First-passage percolation, scaling exponents, KPZ}
\subjclass[2000]{60K35, 82B43}
\thanks{Research partially supported by the NSF grant DMS-1005312}
\begin{abstract}
It has been conjectured in numerous physics papers that in ordinary first-passage percolation on integer lattices, the fluctuation exponent $\chi$ and the wandering exponent $\xi$ are related through the universal relation $\chi=2\xi -1$, irrespective of the dimension. This is sometimes called the KPZ relation between the two exponents. This article gives a rigorous proof of this conjecture assuming that the exponents exist in a certain sense. 
\end{abstract}
\maketitle

\section{Introduction}

Consider the space $\rr^d$ with Euclidean norm $|\cdot |$, where $d\ge 2$. Consider $\zz^d$ as a subset of this space, and say that two points $x$ and $y$ in $\zz^d$ are nearest neighbors if $|x-y|=1$. Let $E(\zz^d)$ be the set of nearest neighbor bonds in $\zz^d$. Let $t= (t_e)_{e\in E(\zz^d)}$ be a collection of i.i.d.\ non-negative random variables. In first-passage percolation, the variable $t_e$ is usually called the `passage time' through the edge $e$, alternately called the `edge-weight' of $e$. We will sometimes refer to the collection $t$ of edge-weights as the `environment'. The total passage time, or total weight, of a path $P$ in the environment $t$ is simply the sum of the weights of the edges in $P$ and will be denoted by $t(P)$ in this article. The first-passage time $T(x,y)$ from a point $x$ to a point $y$ is the minimum total passage time among all lattice paths from $x$ to $y$. For all our purposes, it will suffice to consider self-avoiding paths; henceforth, `lattice path' will refer to only self-avoiding paths.

Note that if the edge-weights are continuous random variables, then with probability one there is a unique `geodesic' between any two points $x$ and $y$. This is  denoted by $G(x,y)$ in this paper. Let $D(x,y)$ be the maximum deviation (in Euclidean distance) of this path from the straight line segment joining $x$ and $y$ (see Figure \ref{fig0}).

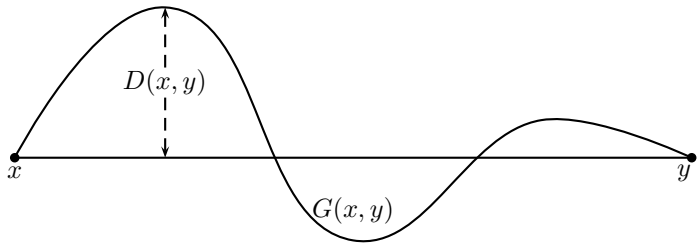
\begin{figure}[h]
\begin{pdfpic}
\begin{pspicture}(-1,-1)(10,2.5)
\psset{xunit=1cm,yunit=1cm}
\rput(0,-.2){\small $x$}
\rput(8.9,-.2){\small $y$}
\psline{*-*}(0,0)(9,0)
\pscurve{-}(0,0)(2,2)(4.5,-1.1)(7,.5)(9,0)
\psline[linestyle=dashed]{->}(2,1.2)(2,2)
\rput(2,1){\small $D(x,y)$}
\psline[linestyle=dashed]{->}(2, .8)(2,0)
\rput(4.5,-.7){\small $G(x,y)$}
\end{pspicture}
\end{pdfpic}
\caption{The geodesic $G(x,y)$ and the deviation $D(x,y)$.}
\label{fig0}
\end{figure}

Although invented by mathematicians \cite{hw65}, the first-passage percolation and related models have  attracted considerable attention in the theoretical physics literature (see \cite{ks91} for a survey). Among other things, the physicists are particularly interested in two `scaling exponents', sometimes denoted by $\chi$ and $\xi$ in the mathematical physics literature. The {\it fluctuation exponent} $\chi$ is a number that quantifies the order of fluctuations of the first-passage time $T(x,y)$. Roughly speaking, for any $x, y$,  
\[
\text{the typical value of } T(x,y)-\ee T(x,y) \text{ is of the order } |x-y|^\chi. 
\]
The {\it wandering exponent} $\xi$ quantifies the magnitude of $D(x,y)$. Again, roughly speaking, for any $x,y$,
\[
\text{the typical value of } D(x,y) \text{ is of the order } |x-y|^\xi. 
\]
There have been several attempts to give precise  mathematical definitions for these exponents (see \cite{lnp96} for some examples) but I could not find a consensus in the literature. The main hurdle is that no one knows whether the exponents actually exist, and if they do, in what sense.

There are many conjectures related to $\chi$ and $\xi$. The main among these, to be found in numerous physics papers \cite{hh85,  kpz86, kz87, krug87, km89, ks91,   meakin86, medina89, wk87}, including the famous paper of Kardar, Parisi and Zhang \cite{kpz86}, is that although $\chi$ and $\xi$ may depend on the dimension, they always satisfy the relation 
\begin{equation*}\label{kpz}
\chi = 2\xi - 1.
\end{equation*}
A well-known conjecture from \cite{kpz86} is that when $d=2$, $\chi = 1/3$ and $\xi = 2/3$. Yet another belief is that $\chi = 0$ if $d$ is sufficiently large. Incidentally, due to its connection with \cite{kpz86}, I've heard in private conversations the relation $\chi=2\xi -1$ being referred to as the `KPZ relation' between $\chi$ and~$\xi$. 

There are a number of rigorous results for $\chi$ and $\xi$, mainly from the late eighties and early nineties. 
One of the first non-trivial results is due to Kesten \cite[Theorem 1]{kesten93}, who proved that $\chi \le 1/2$ in any dimension. The only improvement on Kesten's result till date  is due to Benjamini, Kalai and Schramm \cite{bks03}, who  proved that for first-passage percolation in $d\ge 2$ with binary edge-weights, 
\begin{equation}\label{bksineq}
\sup_{v\in \zz^d, \ |v| >1} \frac{\var T(0,v)}{|v|/\log |v|} < \infty. 
\end{equation}
Bena\"im and Rossignol \cite{br08} extended this result to a large class of edge-weight distributions that they call `nearly gamma' distributions. The definition of a nearly gamma distribution is as follows. A positive random variable $X$ is said to have a nearly gamma distribution if it has a continuous probability density function $h$  supported on an interval $I$ (which may be unbounded), and its distribution function $H$ satisfies, for all $y\in I$, \[
\Phi'\circ \Phi^{-1} (H(y)) \le A \sqrt{y} h(y),
\] 
for some constant $A$, where $\Phi$ is the distribution function of the standard normal distribution. Although the definition may seem a bit strange, Bena\"im and Rossignol \cite{br08} proved  that this class is actually quite large, including e.g.\ exponential, gamma, beta and uniform distributions on intervals. 

The only non-trivial lower bound on the fluctuations of passage times is due to Newman and Piza \cite{np95} and Pemantle and Peres \cite{pp94}, who showed that in $d=2$, $\var T(0,v)$ must grow at least as fast as $\log|v|$. Better lower bounds can be proved if one can show that with high probability, the geodesics lie in `thin cylinders' \cite{cd10}. 

For the wandering exponent $\xi$, the main rigorous results are due to Licea, Newman and Piza \cite{lnp96} who showed that $\xi^{(2)} \ge 1/2$ in any dimension, and $\xi^{(3)} \ge 3/5$ when $d=2$, where $\xi^{(2)}$ and $\xi^{(3)}$ are exponents defined in their paper which may be equal to $\xi$.

Besides the bounds on $\chi$ and $\xi$ mentioned above, there are some rigorous results relating $\chi$ and $\xi$ through inequalities. Wehr and Aizenman~\cite{wa90}  proved the inequality  $\chi \ge (1-(d-1)\xi)/2$ in a related model, and the version of this inequality for first-passage percolation was proved by Licea, Newman and Piza~\cite{lnp96}. The closest that anyone came to proving $\chi=2\xi -1$ is a result of Newman and Piza \cite{np95}, who proved that $\chi' \ge 2\xi -1$, where $\chi'$ is a related exponent which may be equal to $\chi$. This has also been observed by Howard \cite{howard04} under different assumptions. 

Incidentally, in the model of Brownian motion in a Poissonian potential, W\"uthrich \cite{wuthrich98} proved the equivalent of the KPZ relation assuming that the exponents exist. 

The following theorem establishes the relation $\chi = 2\xi -1$ assuming that the exponents $\chi$ and $\xi$ exist in a certain sense (to be defined in the statement of the theorem) and that the distribution of edge-weights is nearly gamma.
\begin{thm}\label{kpzthm}
Consider  the first-passage percolation model on $\zz^d$, $d\ge 2$, with i.i.d.\ edge-weights. Assume  that the distribution of edge-weights is `nearly gamma' in the sense of Bena\"im and Rossignol \cite{br08} (which includes exponential, gamma, beta and uniform distributions, among others), and has a finite moment generating function in a neighborhood of zero. Let $\chi_a$ and $\xi_a$ be the smallest real  numbers such that for all $\chi'> \chi_a$ and $\xi'> \xi_a$, there exists $\alpha > 0$ such that 
\begin{align}
&\sup_{v\in \zz^d\backslash\{0\}} \ee\exp\biggl(\alpha \frac{|T(0,v) - \ee T(0,v)|}{|v|^{\chi'}}\biggr) < \infty, \tag{A1} \label{up1}\\
&\sup_{v\in \zz^d\backslash\{0\}} \ee\exp\biggl(\alpha \frac{D(0,v)}{|v|^{\xi'}}\biggr) < \infty. \tag{A2} \label{up2}
\end{align}
Let $\chi_b$ and $\xi_b$ be the largest real numbers such that for all $\chi'< \chi_b$ and  $\xi'< \xi_b$, there exists $C > 0$ such that 
\begin{align}
&\inf_{v\in \zz^d, \ |v| > C} \frac{\var(T(0,v))}{|v|^{2{\chi'}}} > 0, \tag{A3} \label{down1}\\
&\inf_{v\in \zz^d, \ |v| > C} \frac{\ee D(0,v)}{|v|^{\xi'}} > 0. \tag{A4} \label{down2}
\end{align}
Then $0\le \chi_b \le \chi_a \le 1/2$, $0\le \xi_b\le \xi_a \le 1$ and $\chi_a \ge 2\xi_b -1$. Moreover, if it so happens that $\chi_a=\chi_b$ and $\xi_a = \xi_b$, and these two numbers are denoted by $\chi$ and $\xi$, then they must necessarily satisfy the relation $\chi = 2\xi - 1$. 
\end{thm}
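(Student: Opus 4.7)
The argument divides into three steps: disposing of the easy range bounds, establishing the main inequality $\chi_a\ge 2\xi_b-1$, and verifying the matching bound in the equality case.

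\textbf{Easy bounds.} First, $\chi_a\le 1/2$ comes from Kesten's sub-Gaussian concentration theorem \cite{kesten93}; combining its polynomial-moment conclusion with the finite MGF hypothesis on $t_e$ yields exponential tails for $|T(0,v)-\ee T(0,v)|$ at scale $\sqrt{|v|}$, verifying (A1) for any $\chi'>1/2$. Next, $\xi_a\le 1$ follows because the graph length of $G(0,v)$ is at most $C|v|$ with exponentially small failure probability (the MGF hypothesis plus the continuous nearly-gamma density bounds $t_e$ away from $0$ with high probability). Finally, $\chi_b\le\chi_a$ and $\xi_b\le\xi_a$ are immediate from the definitions: an (A1)-type exponential-moment bound forces $\var T(0,v)\le C|v|^{2\chi'}$, which contradicts (A3) for $\chi'<\chi_b$.

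\textbf{Main inequality.} Fix arbitrary $\chi'>\chi_a$ and $\xi'<\xi_b$; the plan is to show $\chi'\ge 2\xi'-1$. Let $p$ be a random lattice point on the geodesic $G(0,v)$ attaining the maximum transverse deviation $D(0,v)$ from $\overline{0v}$; then the geodesic decomposition gives $T(0,v)=T(0,p)+T(p,v)$. Applying the exponential concentration (A1) to each term (uniformly over choices of $p$ in a linear region, via a union bound and Markov's inequality), one obtains
\[
\ee T(0,p)+\ee T(p,v)-\ee T(0,v)=O(|v|^{\chi'+o(1)}).
\]
A quantitative strict convexity estimate for the expected passage time, established under the nearly-gamma structure (by a perturbation-of-environment argument that invokes the Bena\"im--Rossignol log-Sobolev inequality to compare $\ee T(0,p)+\ee T(p,v)$ with $\ee T(0,v)$), then gives the complementary lower bound
\[
\ee T(0,p)+\ee T(p,v)-\ee T(0,v)\ \ge\ c\,\frac{\ee[D(0,v)^2]}{|v|}\ \ge\ c\,\frac{(\ee D(0,v))^2}{|v|}\ \ge\ c'\,|v|^{2\xi'-1},
\]
where the last step uses (A4) and the middle uses Jensen. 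Combining the two bounds forces $|v|^{2\xi'-1}\le C|v|^{\chi'+o(1)}$; letting $\chi'\downarrow\chi_a$ and $\xi'\uparrow\xi_b$ yields $\chi_a\ge 2\xi_b-1$.

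\textbf{Equality case and main obstacle.} Assuming now $\chi_a=\chi_b=\chi$ and $\xi_a=\xi_b=\xi$, the inequality just proved gives $\chi\ge 2\xi-1$. For the matching $\chi\le 2\xi-1$, one runs a dual argument: using the Bena\"im--Rossignol inequality for nearly-gamma variables, together with (A2) to confine $G(0,v)$ to a tube of transverse radius $|v|^{\xi+o(1)}$ with overwhelming probability, one bounds the variance of $T(0,v)$ by the expected number of geodesic edges times an effective log factor that improves on the Kesten scale by precisely the wandering exponent. A careful accounting yields $\var T(0,v)\le C|v|^{2(2\xi-1)+o(1)}$, hence $\chi\le 2\xi-1$, completing the equality. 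The critical step throughout is the quantitative convexity-type estimate for the expected passage time: strict convexity of the limit shape function is a well-known open problem in first-passage percolation, so one cannot invoke it directly. Instead the nearly-gamma log-Sobolev machinery of Bena\"im--Rossignol is used to obtain a weaker averaged version that suffices for the triangle-defect bound above; this is the heart of the proof.
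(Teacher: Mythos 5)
There are two genuine gaps, one in each half of your argument. For $\chi_a\ge 2\xi_b-1$, everything hinges on your ``quantitative strict convexity estimate'' $\ee T(0,p)+\ee T(p,v)-\ee T(0,v)\ge c\,D^2/|v|$ for an \emph{arbitrary} direction $v$ and an arbitrary transverse displacement of $p$. No such estimate is available: it is equivalent to uniform curvature of the limit shape, which (as you yourself note) is open, and the Bena\"im--Rossignol machinery cannot produce it --- their modified Poincar\'e/log-Sobolev inequalities give variance \emph{upper} bounds for $T$, not convexity of $x\mapsto\ee T(0,x)$. The paper sidesteps this in two moves you are missing: (i) a soft geometric lemma (comparison of the limit shape with its inscribed and circumscribed Euclidean balls) shows there \emph{exists} a direction $x_1$ in which $g(x_1+z)\ge\sqrt{1+|z|^2}\,g(x_1)$ for $z\perp x_1$, and it then proves the triangle-defect lower bound only along $x_1$, which suffices because (A4) is uniform over $v$ so a single bad direction already contradicts $\xi'<\xi_b$; and (ii) Alexander's approximation theorem ($g\le h\le g+C|x|^{\chi'}\log|x|$ for $\chi'>\chi_a$), proved in the paper under (A1), is needed to transfer curvature of the norm $g$ to the expectation $h$ with an error of the admissible order $|v|^{\chi'+o(1)}$; without it your comparison of expected passage times has no basis.

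For the converse inequality $\chi\le 2\xi-1$, your proposed route --- confine the geodesic to a tube of radius $|v|^{\xi+o(1)}$ and then claim the BKS/Bena\"im--Rossignol influence bound ``improves by precisely the wandering exponent'' to give $\var T(0,v)\le C|v|^{2(2\xi-1)+o(1)}$ --- is not a known or valid deduction; those methods yield $\var T\le C|v|/\log|v|$ and restricting to a tube does not convert the tube width into a variance exponent. The paper's actual argument is structurally different: working along the \emph{flat} direction $x_0$ (at most Euclidean curvature), it partitions a tube of width $n^{\xi'}$ into $\sim n^{1-\beta}$ large blocks separated by small buffers of width $n^{\beta'}$, shows using the curvature bound and (A1) that $T$ across the whole tube equals the sum of the block passage times up to an error $o(n^{\chi})$ precisely because $2\xi-\beta'<\beta'\chi$ when $\chi>2\xi-1$, then uses Harris--FKG superadditivity of the variance together with (A3) and iterates against the sublinear bound $\var T(0,v)\le C|v|/\log|v|$ to reach a contradiction; the boundary case $\chi=1/2$ needs a separate multiscale iteration, and the case $\chi=0$ needs an entirely separate resampling argument (making the edge-weights in a box of side $n^{\beta}$ near the infimum of their support to force the geodesic to deviate) to show $\xi\ge 1/2$. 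None of these mechanisms appears in your sketch, so the hard direction remains unproved as written. (Minor: you also leave out the short arguments that $\chi_b\ge0$ and $\xi_b\ge0$, which the theorem asserts.)
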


Note that if $\chi_a=\chi_b$ and $\xi_a = \xi_b$ and these two numbers are denoted by $\chi$ and $\xi$, then $\chi$ and $\xi$ are characterized by the properties that for every $\chi' >\chi$ and $\xi' > \xi$, there are some positive $\alpha$ and $C$ such that for all $v\ne 0$, 
\begin{align*}
&\ee\exp\biggl(\alpha \frac{|T(0,v) - \ee T(0,v)|}{|v|^{\chi'}}\biggr) < C \ \ \text{and} \ \  \ee\exp\biggl(\alpha \frac{D(0,v)}{|v|^{\xi'}}\biggr) < C, 
\end{align*}
and for every $\chi' < \chi$ and $\xi'< \xi$ there are some positive $B$ and $C$ such that for all $v$ with $|v| > C$, 
\begin{align*}
&\var(T(0,v)) > B|v|^{2{\chi'}} \ \ \text{and} \ \ \ee D(0,v) > B|v|^{\xi'}. 
\end{align*}
It seems reasonable to expect that if the two exponents $\chi$ and $\xi$ indeed exist, then they should satisfy the above properties.

Incidentally, a few months after the first draft of this paper was put up on arXiv, Auffinger and Damron \cite{ad11} were able to replace a crucial part of the proof of Theorem \ref{kpzthm} with a simpler argument that allowed them to remove the assumption that the edge-weights are nearly-gamma.

Section \ref{sketch} has a sketch of the proof of Theorem \ref{kpzthm}. The rest of the paper is devoted to the actual proof. Proving that $0\le \chi_b\le \chi_a \le1/2$ and  $0\le \xi_b \le \xi_a\le1$ is a routine exercise; this is done in Section \ref{trivial}. Proving that $\chi_a \ge 2\xi_b -1$ is also relatively easy and similar to the existing proofs of analogous inequalities, e.g.\ in \cite{np95, howard04}. This is done in Section~\ref{easypart}. The `hard part' is proving the opposite inequality, that is, $\chi\le 2\xi -1$ when $\chi=\chi_a=\chi_b$ and $\xi=\xi_a=\xi_b$. This is done in Sections \ref{012}, \ref{hard2} and \ref{hard3}. 


\section{Proof sketch}\label{sketch}
I will try to give a sketch of the proof in this section. I have found it very hard to aptly summarize the main ideas in the proof without going into the details. This proof-sketch represents the end-result of my best efforts in this direction. If the interested reader finds the proof sketch too obscure, I would like to request him to return to this section after going through the complete proof, whereupon this high-level sketch may shed some illuminating insights. 

Throughout this proof sketch, $C$ will denote any positive constant that depends only on the edge-weight distribution and the dimension. 
Let $h(x) := \ee(T(0,x))$.  The function $h$ is subadditive. Therefore the limit
\[
g(x) := \lim_{n\ra\infty} \frac{h(nx)}{n}
\]
exists for all $x\in \zz^d$. The definition can be extended to all $x\in \mathbb{Q}^d$ by taking $n\ra\infty$ through a subsequence, and can be further extended to all $x\in \rr^d$ by uniform continuity. The function $g$ is a norm on $\rr^d$.

The function $g$ is a norm, and hence much more well-behaved than $h$. If $|x|$ is large, $g(x)$ is supposed to be a good approximation of $h(x)$. A method developed by Ken Alexander \cite{alexander93, alexander97} uses the order of fluctuations of passage times to infer bounds on $|h(x)-g(x)|$. In the setting of Theorem~\ref{kpzthm}, Alexander's method yields that for any $\ve >0$, there exists $C$ such that for all $x\ne 0$,
\begin{equation}\label{alex}
g(x)\le h(x)\le g(x)+C|x|^{\chi_a + \ve}.
\end{equation}
This is formally recorded in Theorem \ref{gapthm}. 
In the proof of the main result, the above approximation will allow us to replace the expected passage time $h(x)$ by the norm $g(x)$.

In Lemma \ref{curve}, we prove that there is a unit vector $x_0$ and a hyperplane $H_0$ perpendicular to $x_0$ such that for some $C>0$, for all $z\in H_0$, 
\[
|g(x_0+z)-g(x_0)| \le C|z|^2.
\]
Similarly, there is a unit vector $x_1$ and a hyperplane $H_1$ perpendicular to $x_1$ such that for some $C>0$, for all $z\in H_1$, $|z|\le 1$, 
\[
g(x_1+z)\ge g(x_1) + C|z|^2.
\]
The interpretations of these two inequalities is as follows. In the direction $x_0$, the unit sphere of the norm $g$  is `at most as curved as an Euclidean sphere' and in the direction $x_1$, it is `at least as curved as an Euclidean sphere'. 

Now take a look at Figure \ref{figx1}. Think of $m$ as a fraction of $n$. By the definition of the direction of curvature~$x_1$ and Alexander's approximation~\eqref{alex}, for any $\ve >0$, 
\begin{align*}
&\text{Expected passage time of the path $P$}\\
&\ge g(mx_1+z) + g(nx_1 - (mx_1+z)) + O(n^{\chi+\ve})\\
&= m g(x_1 + z/m) + (n-m) g(x_1 + z/(n-m)) + O(n^{\chi + \ve})\\
&\ge n g(x_1) + C|z|^2/n + O(n^{\chi+\ve})\\
&\ge \ee (T(0, nx_1))+ C|z|^2/n + O(n^{\chi+\ve}).
\end{align*}
Suppose $|z|=n^\xi$. Then $|z|^2/n = n^{2\xi-1}$. Fluctuations of $T(0,nx_1)$ are of order $n^\chi$. Thus, if $2\xi-1 > \chi$, then $P$ cannot be a geodesic from $0$ to $nx_1$. This sketch is formalized into a rigorous argument in Section \ref{easypart} to prove that $\chi_a \ge 2\xi_b -1$. 
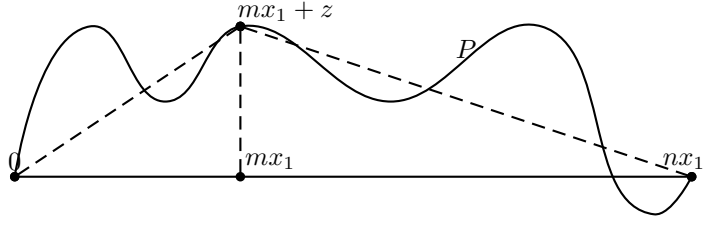
\begin{figure}[h]
\begin{pdfpic}
\begin{pspicture}(-1,-1)(10,2.5)
\psset{xunit=1cm,yunit=1cm}
\rput(0,.2){\small $0$}
\rput(8.9,.2){\small $nx_1$}
\psline{*-*}(0,0)(9,0)
\psline[linestyle=dashed]{*-*}(3,0)(3,2)
\rput(3.4, 0.2){\small $mx_1$}
\rput(3.6, 2.2){\small $mx_1+z$}
\psline[linestyle=dashed]{*-*}(0,0)(3,2)
\psline[linestyle=dashed]{*-*}(3,2)(9,0)
\pscurve(0,0)(1,2)(2,1)(3,2)(5,1)(7,2)(8.5,-.5)(9,0)
\rput(6, 1.7){\small $P$}
\end{pspicture}
\end{pdfpic}
\caption{Proving $\chi \ge 2\xi -1$}
\label{figx1}
\end{figure}

Next, let me sketch the proof of $\chi \le 2\xi -1$ when $\chi > 0$. The methods developed in \cite{cd10} for first-passage percolation in thin cylinders have some bearing on  this part of the proof. Recall the direction of curvature $x_0$. Let $a= n^\beta$, $\beta<1$.  Let $m = n/a = n^{1-\beta}$. Under the conditions $\chi > 2\xi -1$ and $\chi > 0$, we will  show that there is a $\beta < 1$ such that
\[
T(0,nx_0) = \sum_{i=0}^{m-1} T(ia x_0, (i+1)ax_0) + o(n^{\chi}). \tag{$\star$}
\]
This will lead to a contradiction, as follows. Let $f(n) := \var T(0, nx_0)$. Then by Bena\"im and Rossignol \cite{br08}, $f(n)\le Cn/\log n$.
Under ($\star$), by the Harris-FKG inequality, 
\begin{align*}
f(n) = \var T(0,nx_0) &\ge m \var T(0, ax_0) + o(n^{2\chi})\\
&= n^{1-\beta} f(n^\beta) + o(n^{2\chi}).  
\end{align*}
If $\beta$ is chosen sufficiently small, the first term on the right will dominate the second. Consequently,  
\[
\liminf_{n\ra\infty} \frac{f(n)}{n^{1-\beta}f(n^\beta)} \ge 1. \tag{$\dagger$}
\]
Choose $n_0>1$  and define $n_{i+1} = n_i^{1/\beta}$ for each $i$. Let $v(n) := f(n)/n$. Then $v(n_i)\le C/\log n_i \le C \beta^i$. But by ($\dagger$), $\liminf v(n_{i+1})/v(n_i)\ge 1$, and so for all $i$ large enough, $v(n_{i+1})\ge \beta^{1/2}v(n_i)$.  In particular, there is a positive constant $c$ such that for all $i$, $v(n_i) \ge c \beta^{i/2}$.  Since $\beta < 1$, this gives a contradiction for $i$ large, therefore proving that $\chi\le 2\xi-1$.

Let me now sketch a proof of ($\star$) under the conditions $\chi > 2\xi -1$ and $\chi > 0$. Let $a=n^\beta$ and $b= n^{\beta'}$, where $\beta' < \beta<1$. Consider a cylinder of width $n^\xi$ around the line joining $0$ and $nx_0$. Partition the cylinder into alternating big and small cylinders of widths $a$ and $b$ respectively.  Call the boundary walls of these cylinders $U_0, V_0, U_1, V_1, \ldots, V_{m-1}, U_m$, where $m$ is roughly $n^{1-\beta}$ (see Figure \ref{figx3}). 

\begin{figure}[h]
\begin{pdfpic}
\begin{pspicture}(-1,-1)(10,2.5)
\psset{xunit=1cm,yunit=1cm}
\pspolygon(0,0)(2,0)(2,1)(0,1)
\pspolygon[fillstyle=solid, fillcolor=lightgray](2,0)(3,0)(3,1)(2,1)
\pspolygon(3,0)(5,0)(5,1)(3,1)
\pspolygon[fillstyle=solid, fillcolor=lightgray](5,0)(6,0)(6,1)(5,1)
\psline{-}(6,0)(6.5,0)
\psline{-}(6,1)(6.5,1)
\psline[linestyle=dotted]{-}(6.5,0)(7.5,0)
\psline[linestyle=dotted]{-}(6.5,1)(7.5,1)
\psline{-}(7.5,0)(8,0)
\psline{-}(7.5,1)(8,1)
\pspolygon[fillstyle=solid, fillcolor=lightgray](8,0)(9,0)(9,1)(8,1)
\rput(0,-.2){\small $U_0$}
\rput(2,-.2){\small $V_0$}
\rput(3,-.2){\small $U_1$}
\rput(5,-.2){\small $V_1$}
\rput(6,-.2){\small $U_2$}
\rput(8,-.2){\small $V_{m-1}$}
\rput(9,-.2){\small $U_m$}
\psline[linestyle=dotted]{*-*}(0,.5)(0,.5)
\psline[linestyle=dotted]{*-*}(9,.5)(9,.5)
\rput(-.2,.5){\small $0$}
\rput(9.4, .5){\small $nx_0$}
\psline{<-}(0,0.5)(.75, 0.5)
\rput(.9,0.5){\small $a$}
\psline{->}(1.05, .5)(2,.5)
\psline{<-}(2,0.5)(2.35, 0.5)
\rput(2.5,0.5){\small $b$}
\psline{->}(2.65, .5)(3,.5)
\end{pspicture}
\end{pdfpic}
\caption{Cylinder of width $n^\xi$ around the line joining $0$ and $nx_0$}
\label{figx3}
\end{figure}
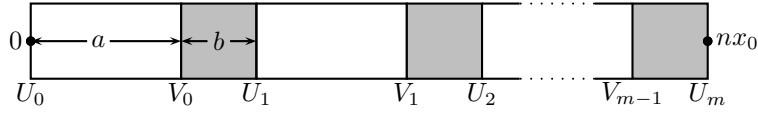

Let $G_i := G(U_i, V_i)$, that is, the path with minimum passage time between any vertex in $U_i$ and any vertex in $V_i$.  Let $u_i$ and $v_i$ be the endpoints of $G_i$.  Let $G_i' := G(v_i, u_{i+1})$.  The concatenation of the paths  $G_0'$, $G_1$, $G_1'$, $G_2$, $\ldots$, $G_{m-1}'$, $G_m$ is a path from $U_0$ to $U_{m}$. Therefore,
\begin{align*}
T(U_0, U_{m}) &\le\sum_{i=1}^{m-1} T(U_i, V_i) + \sum_{i=0}^{m-1} T(v_i, u_{i+1}). 
\end{align*}
Next, let $G := G(U_0, U_m)$.  Let $u_i'$ be the first vertex in $U_i$ visited by $G$ and let $v_i'$ be the first vertex in $V_i$ visited by $G$. If $G$ stays within the cylinder throughout, then $T(u_i', v_i') \ge T(U_i, V_i)$ and $T(v_i', u_{i+1}')\ge T(V_i, U_{i+1})$. Thus,
\begin{align*}
T(U_0, U_{m}) &\ge \sum_{i=0}^{m-1} T(U_i, V_i) + \sum_{i=0}^{m-1} T(V_i, U_{i+1}). 
\end{align*}
Thus, if $G(U_0, U_m)$ stays in a cylinder of width $n^\xi$, then  
\begin{align*}
0&\le T(U_0,U_m)-\sum_{i=0}^{m-1} (T(U_i, V_i)+T(V_i, U_{i+1})) \\
&\le \sum_{i=0}^{m-1}(T(v_i, u_{i+1})-T(V_i, U_{i+1})).
\end{align*}
Therefore, 
\begin{align*}
&\biggl|T(U_0, U_m) -\sum_{i=0}^{m-1} (T(U_i, V_i) + T(V_i, U_{i+1})) \biggr|\le \sum_{i=0}^{m-1} M_i,
\end{align*}
where $M_i := \max_{v,v'\in V_i, \ u,u'\in U_{i+1}} |T(v,u) - T(v', u')|$.
Note that the errors $M_i$ come only from the small blocks. By curvature estimate in direction $x_0$, for any $v,v'\in V_i$ and $u,u'\in U_{i+1}$, 
\[
|\ee T(v,u) - \ee T(v',u')| \le C(n^\xi)^2/n^{\beta'} = Cn^{2\xi-\beta'}. 
\]
Fluctuations of $T(v,u)$ are of order $n^{\beta'\chi}$. If $2\xi-1 < \chi$, then we can choose $\beta'$ so close to $1$ that $2\xi-\beta' < \beta' \chi$. That is,  fluctuations dominate while estimating $M_i$. Consequently, $M_i$ is of order $n^{\beta' \chi}$. Thus, total error $= n^{1-\beta+\beta'\chi}$. Since $\beta' < \beta$ and $\chi >0$, this gives us the opportunity of choosing $\beta', \beta$ such that the exponent is $< \chi$. This proves ($\star$) for passage times from `boundary to boundary'. Proving ($\star$) for `point to point' passage times is only slightly more complicated.  The program is carried out in Sections \ref{012} and \ref{hard2}. 

Finally, for the case $\chi = 0$, we have to prove that $\xi \ge 1/2$.  This was proved by Licea, Newman and Piza \cite{lnp96} for a different definition of the wandering exponent. The argument does not seem to work with our definition. A proof is given in Section \ref{hard3}; I will omit this part from the proof sketch.

\section{A priori bounds}\label{trivial}
In this section we prove the a priori bounds $0\le \chi_b\le \chi_a \le1/2$ and $0\le \xi_b \le \xi_a \le 1$. First, note that the inequalities $\chi_b \le \chi_a$ and $\xi_b\le \xi_a$ are easy. For example, if $\chi_b > \chi_a$, then for any $\chi_a < \chi'< \chi''< \chi_b$,  \eqref{up1} implies that 
\[
\sup_{v\in \zz^d\backslash \{0\}} \frac{\var(T(0,v))}{|v|^{2\chi'}} < \infty,
\]
and hence for any sequence $v_n$ such that $|v_n|\ra\infty$, 
\[
\lim_{n\ra\infty} \frac{\var(T(0,v_n))}{|v_n|^{2\chi''}} = 0, 
\]
which contradicts \eqref{down1}. A similar argument shows that $\xi_b \le \xi_a$. 

To show that $\chi_b \ge 0$, let $E_0$ denote the set of all edges incident to the origin. Let $\mf_0$ denote the sigma-algebra generated by $(t_e)_{e\not\in E_0}$. Since the edge-weight distribution is non-degenerate, there exists $c_1 < c_2$ such that for an edge $e$, $\pp(t_e < c_1) >0$ and $\pp(t_e > c_2) >0$. Therefore,
\begin{equation}\label{ppmax}
\pp(\max_{e\in E_0} t_e < c_1) > 0, \ \ \pp(\min_{e\in E_0} t_e > c_2) > 0.
\end{equation}
Let $(t'_e)_{e\in E_0}$ be an independent configuration of edge weights. Define $t_e' = t_e$ if $e\not\in E_0$. Let $T'(0,v)$ be the first-passage time from $0$ to a vertex $v$ in the new environment $t'$. If $t_e < c_1$ and $t_e' > c_2$ for all $e\in E_0$, then $T'(0,v) > T(0,v) + c_2-c_1$. Thus, by \eqref{ppmax}, there exists $\delta > 0$ such that for any $v$ with $|v| \ge 2$,
\[
\ee \var(T(0,v)|\mf_0)  = \frac{1}{2}\ee(T(0,v)-T'(0,v))^2 > \delta.
\]
Therefore $\var(T(0,v)) > \delta$ and so $\chi_b \ge 0$. 

To show that $\xi_b \ge 0$, note that there is an $\ep>0$ small enough such that for any $v\in \zz^d$ with $|v|\ge 2$, there can be at most one lattice path from $0$ to $v$ that stays within distance $\ep$ from the straight line segment joining $0$ to $v$. Fix such a vertex $v$ and such a path $P$. If the number of edges in $P$ is sufficiently large, one can use the non-degeneracy of the edge-weight distribution to show by an explicit assignment of edge weights that 
\[
\pp(\text{$P$ is a geodesic}) < \delta,
\] 
where $\delta< 1$ is a constant that depends only on the edge-weight distribution (and not on $v$ or $P$). This shows that for $|v|$ sufficiently large, $\ee D(0,v)$ is bounded below by a positive constant that does not depend on $v$, thereby proving that $\xi_b \ge 0$.

Let us next show that $\chi_a \le 1/2$. Essentially, this follows from \cite[Theorem 1]{kesten93} or  \cite[Proposition 8.3]{talagrand95}, with a little bit of extra work. Below, we give a proof using \cite[Theorem 5.4]{br08}. First, note that there is a constant $C_0$ such that for all $v$,
\begin{align}\label{tv1}
\ee T(0,v)\le C_0|v|_1, 
\end{align}
where $|v|_1$ is the $\ell_1$ norm of $v$. 
From the assumptions about the distribution of edge-weights, \cite[Theorem 5.4]{br08} implies that there are positive constants $C_1$ and $C_2$ such that for any $v\in \zz^d$ with $|v|_1\ge 2$, and any $0\le t\le |v|_1$, 
\begin{equation}\label{uptail}
\pp\biggl(|T(0,v)-\ee T(0,v)| \ge t\sqrt{\frac{|v|_1}{\log|v|_1}}\biggr) \le C_1 e^{-C_2 t}. 
\end{equation}
Fix a path $P$ from $0$ to $v$ with $|v|_1$ edges. Recall that $t(P)$ denotes the sum of the weights of the edges in $P$.  Since the edge-weight distribution has finite moment generating function in a neighborhood of zero and~\eqref{tv1} holds, it is easy to see that there are positive constants~$C_3$, $C_4$ and $C_4'$  such that if $|v|_1 > C_3$, then for any $t> |v|_1$, 
\begin{equation}\label{uptail2}
\begin{split}
&\pp\biggl(|T(0,v)-\ee T(0,v)| \ge t\sqrt{\frac{|v|_1}{\log|v|_1}}\biggr)\\
 &\le  \pp\biggl(T(0,v) \ge C_0|v|_1 + t\sqrt{\frac{|v|_1}{\log|v|_1}}\biggr)\\
&\le\pp\biggl(t(P) \ge C_0|v|_1 + t\sqrt{\frac{|v|_1}{\log|v|_1}} \biggr) \le e^{C_4|v|_1 - C_4't\sqrt{|v|_1/\log|v|_1}}. 
\end{split}
\end{equation}
Combining \eqref{uptail} and \eqref{uptail2} it follows that there are constants $C_5$, $C_6$ and $C_7$ such that for any~$v$ with $|v|_1> C_5$,
\[
\ee\exp\biggl(C_6 \frac{|T(0,v)-\ee T(0,v)|}{\sqrt{|v|_1/\log|v|_1}}\biggr) \le C_7. 
\]
Appropriately increasing $C_7$, one sees that the above inequality holds for all $v$ with $|v|_1\ge 2$. In particular, $\chi_a \le 1/2$. 

Finally, let us prove that $\xi_a \le 1$. Consider a self-avoiding path $P$ starting at the origin, containing $m$ edges. By the strict positivity of the edge-weight distributions, for any edge~$e$, 
\[
\lim_{\theta \ra\infty}\ee(e^{- \theta t_e}) =0.
\]
Now, for any $\theta, c>0$,
\begin{align*}
\pp(t(P) \le cm) = \pp(e^{-t(P)/c}\ge e^{-m}) &\le (e\ee(e^{-t_e/c}))^m. 
\end{align*}
Thus, given any $\delta >0$ there exists $c$ small enough such that for any $m$ and any self-avoiding path $P$ with $m$ edges,
\[
\pp(t(P) \le cm) \le \delta^m. 
\]
Since there are at most $(2d)^m$ paths with $m$ edges, therefore there exists $c$ small enough such that 
\[
\pp(t(P) \le cm \text{ for some $P$ with $m$ edges}) \le 2^{-m-1},
\]
and therefore 
\begin{equation}\label{tp}
\pp(t(P) \le cm \text{ for some $P$ with $\ge m$ edges}) \le 2^{-m}. 
\end{equation}
There is a constant $B>0$ such that for any $t\ge 1$ and any vertex $v\ne 0$, if $D(0,v) \ge t|v|$, then $G(0,v)$ has at least $Bt|v|$ edges. Therefore from \eqref{tp}, 
\begin{align*}
\pp(D(0,v) \ge t|v|) &\le \pp(T(0,v) \ge Bt|v|/c) + 2^{-Bt|v|}. 
\end{align*}
As in \eqref{uptail2}, there is a constant $C$ such that if $P$ is a path from $0$ to $v$ with $|v|_1$ edges, 
\[
\pp(T(0,v) \ge Bt|v|/c) \le \pp(t(P) \ge Bt |v|/c) \le e^{C|v| - Bt|v|/c}. 
\]
Combining the last two displays shows that for some $\alpha$ small enough,
\[
\sup_{v\ne 0}\ee\exp\biggl(\alpha \frac{D(0,v)}{|v|}\biggr) < \infty,
\]
and thus, $\xi_a \le 1$.

\section{Alexander's subadditive approximation theory}
The first step in the proof of Theorem \ref{kpzthm} is to find a suitable approximation of $\ee T(0,x)$ by a convex function $g(x)$. For $x\in \zz^d$, define 
\begin{equation}\label{hdef}
h(x) := \ee T(0,x). 
\end{equation}
It is easy to see that $h$ satisfies the subadditive inequality 
\[
h(x+y)\le h(x)+h(y). 
\]
By the standard subadditive argument, it follows that 
\begin{equation}\label{gdef}
g(x) := \lim_{n\ra\infty} \frac{h(nx)}{n}  
\end{equation}
exists for each $x\in \zz^d$. In fact, $g(x)$ may be defined similarly for $x\in \mathbb{Q}^d$ by taking $n\ra\infty$ through a sequence of $n$ such that $nx\in \zz^d$. The function $g$ extends continuously to the whole of $\rr^d$, and the extension is a norm on $\rr^d$ (see e.g.\ \cite[Lemma 1.5]{alexander97}).  Note that by subadditivity,
\begin{equation}\label{gh}
g(x)\le h(x) \text{ for all } x\in \zz^d. 
\end{equation}
Since the edge-weight distribution is continuous in the setting of Theorem~\ref{kpzthm}, it follows by a well-known result (see \cite{kesten86}) that $g(x) > 0$ for each $x\ne 0$. Let $e_i$ denote the $i$th coordinate vector in $\rr^d$. Since $g$ is symmetric with respect to interchange of coordinates and reflections across all coordinate hyperplanes, it is easy to show using subadditivity that
\begin{equation}\label{ge}
|x|_\infty \le g(x)/g(e_1) \le |x|_1 \text{ for all } x\ne 0,
\end{equation}
where $|x|_p$ denotes the $\ell_p$ norm of the vector $x$.

How well does $g(x)$ approximate $h(x)$? Following the work of Kesten \cite{kesten86, kesten93}, Alexander \cite{alexander93, alexander97} developed a general theory for tackling such questions. One of the main results of Alexander \cite{alexander97} is that under appropriate hypotheses on the edge-weights, there exists some $C >0$ such that for all $x\in \zz^d\backslash\{0\}$, 
\[
g(x)\le h(x) \le g(x) + C|x|^{1/2}\log |x|.
\]
Incidentally, Alexander has recently been able to obtain slightly improved results for nearly gamma edge-weights \cite{alexander11}. It turns out that under the hypotheses of Theorem \ref{kpzthm}, Alexander's argument goes through almost verbatim to yield the following result.
\begin{thm}\label{gapthm}
Consider the setup of Theorem \ref{kpzthm}. Let $g$ and $h$ be defined as in \eqref{gdef} and \eqref{hdef} above. Then for any $\chi' >\chi_a$, there exists $C > 0$ such that for all $x\in \zz^d$ with $|x|>1$,
\[
g(x) \le h(x)\le g(x)+C|x|^{\chi'} \log |x|. 
\]
\end{thm}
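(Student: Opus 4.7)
The lower bound $g(x) \le h(x)$ is just the subadditivity of $h$, as noted in \eqref{gh}. Only the upper bound requires work.

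My plan is to adapt the subadditive approximation framework of Alexander \cite{alexander97} with the $|x|^{1/2}$ fluctuation scale of his paper replaced by $|x|^{\chi''}$ for some $\chi'' \in (\chi_a, \chi')$. The first step is to extract the two ingredients that feed the machine. From hypothesis \eqref{up1} at level $\chi''$, Markov's inequality yields a uniform exponential concentration bound
\[
\pp\bigl(|T(0,v) - h(v)| \ge s |v|^{\chi''}\bigr) \le C e^{-\alpha s}, \qquad v \in \zz^d\setminus\{0\},\ s \ge 0,
\]
which is the ``$r(n) = n^{\chi''}$'' fluctuation input. Separately, the assumption of a finite moment generating function at the edge level, combined with the linear upper bound $h(v) \le C|v|_1$ already used in Section~\ref{trivial}, gives an exponential upper-tail bound on $T(0,v)$ at the linear scale; this rules out pathologically long geodesics and controls the contribution of very long candidate paths in the covering argument below.

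The core of the argument is Alexander's ``shell'' construction. Fix $x$ with $|x|$ large, select a large multiple $N$ (chosen so that $h(Nx)/N$ lies within $\epsilon$ of $g(x)$, which is possible by the definition of $g$), and consider a shell $S$ of $\asymp |x|^{\rho(d-1)}$ lattice points within Euclidean distance $|x|^\rho$ of $Nx$, for a small exponent $\rho>0$ to be chosen. Two facts control the analysis on $S$: a deterministic Lipschitz estimate $|h(y) - h(Nx)| = O(|x|^\rho)$ for $y \in S$ (from subadditivity and the moment assumption), and the pointwise concentration bound above. A union bound over $S$, combined with the observation that $T(0,y) \ge h(y) - c|y|^{\chi''}\log|y|$ for every $y\in S$ simultaneously with high probability, produces a deterministic path from $0$ into the shell whose passage time is tightly controlled around $Ng(x)$. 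Pulling this back via subadditivity of $h$ (an approximate multiple of the path covers $0\to Nx$, at the cost of a further $O(|x|^\rho)$ detour) and dividing by $N$ yields the desired estimate $h(x) - g(x) \le C|x|^{\chi'}\log|x|$.

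The main obstacle is the delicate balancing of $N$ and $\rho$: the Lipschitz variation $|x|^\rho$ across the shell must be dominated by the fluctuation scale $(N|x|)^{\chi''}$; the shell size $|x|^{\rho(d-1)}$ must be large enough that the union bound costs only a logarithm; and the final reduction from $Nx$ to $x$ must keep the resulting exponent at $\chi'>\chi''$, strictly above $\chi_a$. Since these constraints are exactly the ones Alexander optimized in his original setting (with $\chi''=1/2$) using only the two tail ingredients identified above, his argument should adapt essentially verbatim -- the only vigilance required is to verify that his constants depend benignly on $\chi''$ as $\chi''$ ranges over $(\chi_a, 1/2]$, so that the resulting $C$ is independent of $x$ for each fixed $\chi'>\chi_a$.
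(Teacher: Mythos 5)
Your overall plan---run Alexander's approximation theory with the fluctuation scale $|x|^{1/2}$ replaced by $|x|^{\chi''}$ for some $\chi''\in(\chi_a,\chi')$, feeding it the concentration coming from \eqref{up1} and the exponential upper tail coming from the finite moment generating function---is exactly the paper's strategy, and the two inputs you isolate are the right ones. The problem is the middle of your argument, where you describe what the machine actually does. The ``shell'' construction around $Nx$ followed by ``pulling this back via subadditivity of $h$ \dots and dividing by $N$'' is not Alexander's argument, and as written it cannot give the upper bound: covering $0\to Nx$ by an approximate multiple of a path from $0$ to $x$ and invoking subadditivity only bounds $h(Nx)$ above by $Nh(x)$ plus lower-order terms, which after dividing by $N$ reproduces the trivial inequality $g(x)\le h(x)$ of \eqref{gh}. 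To bound $h(x)-g(x)$ from above you need the reverse direction, an approximate superadditivity statement of the form $h(Nx)\ge N\bigl(h(x)-C|x|^{\chi'}\log|x|\bigr)$, and neither a union bound over a shell of endpoints near $Nx$ nor Lipschitz control of $h$ on that shell produces it; in particular the geodesic from $0$ to $Nx$ need not pass anywhere near the intermediate points $kx$, and controlling that transverse wandering is precisely what is \emph{not} available at this stage of the paper (it is the subject of the hard direction of the main theorem, which uses curvature and the exponent $\xi$).

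What Alexander's proof (and the paper's near-verbatim copy of it) actually does differs in the two places your sketch is silent or wrong. First, the transfer from scale $nx$ down to scale $x$ is performed by Alexander's Theorem 1.8, quoted as Lemma \ref{alexander}: its hypothesis concerns lattice paths from $0$ to $nx$ admitting skeletons $0=v_0,\ldots,v_m=nx$ with $m\le an$ and every increment in the set $Q_x(C,K)$ defined through the tangent linear functional $g_x$; the functional $g_x$ is exactly the device that replaces the unavailable claim that the geodesic stays near the segment, and the proof of that criterion is a genuine multiscale iteration, not a one-line subadditivity step. Second, verifying the hypothesis (Proposition \ref{copy2}) requires a union bound over all $Q_x$-skeletons with $m+1$ vertices---there are of order $(C|x|^d)^m$ of them, so the deviation must be taken proportional to $m|x|^{\chi'}\log|x|$ for the exponential bound from \eqref{up1} to beat the entropy---together with a comparison of the dependent passage times along the segments of a single geodesic with independent copies (Kesten's inequality (4.13) in \cite{kesten86}, or \cite[Theorem 2.3]{alexander93}), and the counting argument via $g_x$ and Lemma \ref{copy} giving $n\le m\le 2n$. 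Your closing assertion that Alexander's argument adapts with $\chi''$ in place of $1/2$ is correct---that is the paper's proof---but the mechanism you supply in its place fails at the step identified above, so the proposal as written does not contain a proof.
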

Sacrificing brevity for the sake of completeness, I will now prove Theorem~\ref{gapthm} by copying Alexander's argument with only minor changes at the appropriate points.

Fix $\chi'> \chi_a$. Since $0\le \chi_a\le 1/2$, so $\chi'$ can be chosen to satisfy $0<\chi'<1$. 

Let $B_0 := \{x: g(x)\le 1\}$. Given $x\in \rr^d$, let $H_x$ denote a hyperplane tangent to the boundary of $g(x)B_0$ at $x$. Note that if the boundary is not smooth, the choice of $H_x$ may not be unique. Let $H_x^0$ be the hyperplane through the origin that is parallel to $H_x$. There is a unique linear functional $g_x$ on $\rr^d$ satisfying
\[
g_x(y) = 0 \text{ for all } y\in H_x^0, \ \ g_x(x) = g(x). 
\]
For each $x\in \rr^d$, $C>0$ and $K > 0$ let
\begin{align*}
&Q_x(C, K) \\
&\quad := \{y\in \zz^d: |y|\le K|x|, \ g_x(y)\le g(x),\ h(y) \le g_x(y) + C |x|^{\chi'}\log|x|\}. 
\end{align*}
The following key result is taken from \cite{alexander97}. 
\begin{lmm}[Alexander \cite{alexander97}, Theorem 1.8]\label{alexander}
Consider the setting of Theorem~\ref{gapthm}. Suppose that for some $M> 1$, $C>0$, $K>0$ and $a> 1$, the following holds. For each $x\in \mathbb{Q}^d$ with $|x|\ge M$, there exists an integer $n\ge 1$, a lattice path $\gamma$ from $0$ to $nx$, and a sequence of sites $0=v_0, v_1,\ldots, v_m = nx$ in $\gamma$ such that $m\le an$ and $v_i - v_{i-1}\in Q_x(C, K)$ for all $1\le i\le m$. Then the conclusion of Theorem \ref{gapthm} holds. 
\end{lmm}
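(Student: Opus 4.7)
The plan is to chain subadditivity of $h$ along the distinguished path provided by the hypothesis, using the defining inequality of $Q_x(C,K)$ to control each segment.

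First, for each $x\in\mathbb{Q}^d$ with $|x|\ge M$, I would apply subadditivity of $h$ along the sequence $v_0=0,v_1,\dots,v_m=nx$ to get $h(nx)\le\sum_{i=1}^m h(v_i-v_{i-1})$. Since each increment lies in $Q_x(C,K)$, the defining bound gives $h(v_i-v_{i-1})\le g_x(v_i-v_{i-1})+C|x|^{\chi'}\log|x|$. Summing, the main term telescopes via linearity of $g_x$ to $g_x(nx)=ng(x)$, while the error contributes at most $mC|x|^{\chi'}\log|x|\le anC|x|^{\chi'}\log|x|$. This produces the intermediate bound $h(nx)\le ng(x)+anC|x|^{\chi'}\log|x|$ for every $x\in\mathbb{Q}^d$ with $|x|\ge M$.

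To convert this into the desired estimate $h(y)\le g(y)+C'|y|^{\chi'}\log|y|$ for $y\in\zz^d$ with $|y|>1$, I would apply the hypothesis with $x=y$. For $|y|\le M$, the finitely many relevant $y$ (modulo lattice symmetries of $g$ and $h$) can be absorbed by enlarging $C'$. For $|y|>M$, since $y\in\zz^d$ already has integer coordinates, the path from $0$ to $y$ asserted in the hypothesis can be taken with $n=1$, so the intermediate bound collapses to $h(y)\le g(y)+aC|y|^{\chi'}\log|y|$, yielding the conclusion with $C'=aC$.

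The main obstacle is the apparent flexibility of allowing $n>1$ in the hypothesis: a naive attempt to infer a bound on $h(y)$ from a bound on $h(ny)$ founders because the only available direction of subadditivity, $h(ny)\le nh(y)$, runs the wrong way. The resolution I would rely on is conceptual --- the hypothesis is phrased for $x\in\mathbb{Q}^d$ so that $n$ can absorb denominators arising in the rational coordinates of $x$, but the conclusion concerns integer $y$, where specializing $x=y$ makes $n=1$ admissible and the reduction is clean. In the eventual verification of the hypothesis within Theorem~\ref{gapthm}, one should therefore aim to construct the good path with $n$ exactly reflecting the denominators of $x$, so that the case $x\in\zz^d$ gives $n=1$.
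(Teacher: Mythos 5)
Your argument does not stand on its own, and the paper offers no proof to fall back on: Lemma \ref{alexander} is imported verbatim from Alexander \cite{alexander97} (his Theorem 1.8), so the citation is the paper's entire ``proof.'' The decisive flaw in your write-up is the claim that for $y\in\zz^d$ with $|y|>M$ ``the path from $0$ to $y$ asserted in the hypothesis can be taken with $n=1$.'' The hypothesis is purely existential in $n$: for each $x$ it supplies \emph{some} integer $n\ge 1$ together with a skeleton of a path to $nx$, and you are not entitled to choose $n$, even when $x$ has integer coordinates. The role of $n$ is not to clear denominators of a rational $x$; it is essential because skeletons with few increments can only be produced for large multiples of $x$. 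Indeed, in this paper the hypothesis is verified by Proposition \ref{copy2}, which constructs a $Q_x$-skeleton with at most $2n+1$ vertices only for \emph{sufficiently large} $n$: the construction rests on the concentration statement $\pp(T(0,nx)\le ng(x)+n)\ra 1$ as $n\ra\infty$ and on a union bound over the at most $(C_8|x|^d)^m$ possible skeletons, neither of which is available at $n=1$. A version of the hypothesis valid at $n=1$ would, as your own telescoping computation shows, already be essentially the conclusion, so invoking it begs the question.

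Once $n>1$ must be allowed, your chaining of subadditivity along the skeleton yields only $h(nx)\le ng(x)+anC|x|^{\chi'}\log|x|$, i.e.\ an upper bound on $h(nx)/n$. But by subadditivity and \eqref{gh} one has $g(x)\le h(nx)/n\le h(x)$, so an upper bound on $h(nx)/n$ gives no control whatsoever on $h(x)-g(x)$: the quantity you bound already sits below the quantity you need to bound. This is exactly the ``wrong direction of subadditivity'' obstacle you yourself identify, and re-reading the hypothesis does not remove it. Bridging that gap is the entire content of Alexander's Theorem 1.8, whose proof is an iterative, multiscale argument: roughly, one exploits that the skeleton increments are themselves shorter vectors (they satisfy $|v_i-v_{i-1}|\le K|x|$ and $g_x(v_i-v_{i-1})\le g(x)$) and propagates approximate bounds across a hierarchy of scales, the factor $\log|x|$ absorbing the accumulated errors. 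A self-contained treatment would have to reproduce an argument of that type; a single application of subadditivity along one skeleton cannot deliver the conclusion.
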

Before proving that the conditions of Lemma \ref{alexander} hold, we need some preliminary definitions and results. Define 
\[
s_x(y) := h(y) - g_x(y), \ \ y\in \zz^d. 
\]
By the definition of $g_x$ and the fact that $g$ is a norm, it is easy to see that
\begin{equation}\label{gg}
|g_x(y)|\le g(y),
\end{equation}
and by subadditivity, $g(y) \le h(y)$. Therefore $s_x(y)\ge 0$. Again from subadditivity of $h$ and linearity of $g_x$,
\begin{equation}\label{ss}
s_x(y+z)\le s_x(y) + s_x(z) \ \ \text{for all } y,z\in \zz^d. 
\end{equation}
Let $C_1 := 320 d^2/\alpha$, where $\alpha$ is from the statement of Theorem \ref{kpzthm}. As in~\cite{alexander97}, define
\begin{align*}
Q_x &:= Q_x(C_1, 2d+1),\\
G_x &:= \{y\in \zz^d: g_x(y) > g(x)\},\\
\Delta_x &:= \{y\in Q_x: y \text{ adjacent to } \zz^d\backslash Q_x, \ y \text{ not adjacent to } G_x\}, \\
D_x &:= \{y\in Q_x: y \text{ adjacent to } G_x\}. 
\end{align*}
The following Lemma is simply a slightly altered copy of  Lemma 3.3 in \cite{alexander97}. 
\begin{lmm}\label{copy}
Assume the conditions of Theorem \ref{kpzthm}.  Then there exists a constant $C_2$ such that if $|x|\ge C_2$, the following hold.
\begin{itemize}
\item[(i)] If $y\in Q_x$ then $g(y) \le 2g(x)$ and $|y|\le 2d|x|$. 
\item[(ii)] If $y\in \Delta_x$ then $s_x(y)\ge C_1 |x|^{\chi'}(\log |x|)/2$. 
\item[(iii)] If $y\in D_x$ then $g_x(y) \ge 5g(x)/6$. 
\end{itemize}
\end{lmm}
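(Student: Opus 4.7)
The three parts all follow from combining the defining inequalities of $Q_x$, $\Delta_x$, and $D_x$ with three ingredients already set up in the excerpt: the norm comparison $|\cdot|_\infty\le g/g(e_1)\le|\cdot|_1$ from \eqref{ge}, the bound $|g_x(y)|\le g(y)$ from \eqref{gg}, and the subadditivity \eqref{ss} of the correction $s_x=h-g_x$. The common mechanism is that the $O(1)$ error created by moving across a single edge is negligible compared to either $g(x)\asymp|x|$ or $|x|^{\chi'}\log|x|$, so choosing $C_2$ large enough simply wipes out all such boundary terms.

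For part (i), I would start from the $Q_x$ definition to get $h(y)\le g_x(y)+C_1|x|^{\chi'}\log|x|\le g(x)+C_1|x|^{\chi'}\log|x|$, and then use $g\le h$ together with $\chi'<1$ to absorb the correction into $g(x)$ (since $g(x)\ge g(e_1)|x|_\infty$ grows linearly). This yields $g(y)\le 2g(x)$ once $|x|\ge C_2$, and the Euclidean bound $|y|\le 2d|x|$ falls out of \eqref{ge} applied to $y$ and to $x$.

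For part (ii), the plan is to pick a lattice neighbor $y'$ of $y$ lying outside $Q_x$. Part (i) gives $|y'|\le 2d|x|+1\le(2d+1)|x|$, and the assumption that $y$ is not adjacent to $G_x$ gives $g_x(y')\le g(x)$; hence the only way $y'$ can fail to lie in $Q_x$ is by violating the third defining clause, i.e., $s_x(y')>C_1|x|^{\chi'}\log|x|$. Now \eqref{ss} gives $s_x(y)\ge s_x(y')-s_x(y'-y)$, and since $y'-y$ is a unit coordinate vector, $s_x(y'-y)\le h(y'-y)+g(y'-y)\le 2\,\ee t_e$, a universal constant. Absorbing this $O(1)$ term into $C_1|x|^{\chi'}(\log|x|)/2$ for $|x|\ge C_2$ finishes the step.

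Part (iii) is the cleanest: a neighbor $y'\in G_x$ gives $g_x(y)=g_x(y')-g_x(y'-y)\ge g(x)-g(y'-y)\ge g(x)-g(e_1)$ by linearity of $g_x$ and \eqref{gg}, so one only needs $C_2$ large enough that $g(x)\ge 6g(e_1)$ whenever $|x|\ge C_2$. There is no genuine obstacle here; the only things to monitor are the logical order—part (i) feeds the $|y'|\le(2d+1)|x|$ clause in part (ii)—and the choice of $C_2$, which must be made large enough to simultaneously dominate the $C_1|x|^{\chi'}\log|x|$ slack in (i), the $O(1)$ edge error in (ii), and the constant $6g(e_1)$ in (iii).
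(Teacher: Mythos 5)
Your proposal is correct and follows essentially the same route as the paper: part (i) via $g\le h\le g_x+C_1|x|^{\chi'}\log|x|\le g(x)+C_1|x|^{\chi'}\log|x|$ together with \eqref{ge} and $\chi'<1$ (the paper just phrases it contrapositively), part (ii) via the neighbor $z\notin Q_x$, $z\notin G_x$ forcing $s_x(z)>C_1|x|^{\chi'}\log|x|$ and then subadditivity \eqref{ss} with the $O(1)$ bound $s_x(\pm e_i)\le h(\pm e_i)+g(\pm e_i)$, and part (iii) via linearity of $g_x$ and \eqref{gg} with $g(x)\ge 6g(e_1)$ for $|x|\ge C_2$. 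No gaps worth noting.
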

\begin{proof}
(i) Suppose $g(y) > 2g(x)$ and $g_x(y) \le g(x)$. Then using \eqref{gh} and \eqref{gg}, 
\begin{align*}
2g(x) < g(y) \le h(y) = g_x(y)+s_x(y) \le g(x) + s_x(y), 
\end{align*}
so from \eqref{ge}, $s_x(y) > g(x) > C_1 |x|^{\chi'}\log|x|$ provided $|x| \ge C_2$. Thus $y\not \in Q_x$ and the first conclusion in (i) follows. The second conclusion then follows from~\eqref{ge}. 

(ii) Note that $z= y\pm e_i$ for some $z\in \zz^d \cap Q_x^c \cap G_x^c$ and $i\le d$. From (i) we have $|y|\le 2d|x|$, so $|z|\le (2d+1)|x|$, provided $|x| > 1$. Since $z\not \in Q_x$ we must then have $s_x(z)> C_1 |x|^{\chi'}\log |x|$, while using \eqref{gg},
\[
h(\pm e_i) = s_x(\pm e_i) + g_x(\pm e_i) \ge s_x(\pm e_i) - g(\pm e_i).
\]
Consequently, by \eqref{ss},  if $|x|\ge C_2$, 
\begin{align*}
s_x(y) &\ge s_x(z) - s_x(\pm e_i) \\
&\ge C_1 |x|^{\chi'}\log|x| - h(\pm e_i) - g(\pm e_i) \\
&\ge C_1 |x|^{\chi'}(\log|x|) /2. 
\end{align*}
(iii) As in (ii) we have $z=y\pm e_i$ for some $z\in \zz^d \cap G_x$ and $i\le d$. Therefore using \eqref{ge} and \eqref{gg}, 
\[
g_x(y) = g_x(z) - g_x(\pm e_i) \ge g_x(z)- g(\pm e_i) \ge 5g(x)/6
\]
for all $|x|\ge C_2$. 
\end{proof}
Let us call the $m+1$ sites in Lemma \ref{alexander} marked sites. If $m$ is unrestricted, it is easy to find inductively a sequence of marked sites for any path $\gamma$ from $0$ to $nx$, as follows. One can start at $v_0 = 0$, and given $v_i$, let $v_{i+1}'$ be the first site (if any) in $\gamma$, coming after $v_i$, such that $v_{i+1}' - v_i \not \in Q_x$; then let $v_{i+1}$ be the last site in $\gamma$ before $v_{i+1}'$ if $v_{i+1}'$ exists; otherwise let $v_{i+1}=nx$ and end the construction. If $|x|$ is large enough, then it is easy to deduce from \eqref{ge} and \eqref{gg} that all neighbors of the origin must belong to $Q_x$ and therefore $v_{i+1}\ne v_i$ for each $i$ and hence the construction must end after a finite number of steps. We call the sequence of marked sites obtained from a self-avoiding path $\gamma$ in this way, the $Q_x$-skeleton of $\gamma$. 

Given such a skeleton $(v_0,\ldots, v_m)$, abbreviated $(v_i)$, of some lattice path, we divide the corresponding indices into two classes, corresponding to `long' and `short' increments:
\begin{align*}
S((v_i)) &:= \{i: 0\le i< m-1, \ v_{i+1}-v_i\in \Delta_x\},\\
L((v_i)) &:= \{i: 0\le i< m-1, \ v_{i+1}-v_i\in D_x\}. 
\end{align*}
Note that the final index $m$ is in neither class, and by Lemma \ref{copy}(ii), 
\begin{equation}\label{svi}
j\in S((v_i)) \ \text{ implies } \ s_x(v_{j+1}-v_j) > C_1 |x|^{\chi'}(\log|x|) /2.
\end{equation}
The next result is analogous to Proposition 3.4 in \cite{alexander97}. 
\begin{prop}\label{copy2}
Assume the conditions of Theorem \ref{kpzthm}. There exists a constant $C_3$ such that if $|x|\ge C_3$ then for sufficiently large $n$ there exists a lattice path from $0$ to $nx$ with $Q_x$-skeleton of $2n+1$ or fewer vertices. 
\end{prop}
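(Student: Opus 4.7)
The plan is to take $\gamma=G(0,nx)$, the (almost surely unique) geodesic from $0$ to $nx$, form its $Q_x$-skeleton $(v_0,\ldots,v_m)$ as described just before the statement, and prove that $m\le 2n$ on an event of positive probability; existence of a lattice path with the claimed property then follows. I will split the skeleton indices into the long set $L$ and short set $S$ and bound each cardinality separately.

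For the long increments, the linearity of $g_x$ gives the identity
\[
\sum_{i=0}^{m-1} g_x(v_{i+1}-v_i) = g_x(nx) = n\,g(x).
\]
Combined with the lower bound $g_x(v_{i+1}-v_i)\ge 5g(x)/6$ from Lemma \ref{copy}(iii) for $i\in L$, and the two-sided bound $|g_x(y)|\le g(y)\le 2g(x)$ on $Q_x$ (which follows from $|g_x|\le g$ and Lemma \ref{copy}(i)), this yields an inequality of the form $\tfrac{5}{6}g(x)\,|L|\le n\,g(x)+2g(x)\,|S|$, so $|L|\le \tfrac{6}{5}n+\tfrac{12}{5}|S|$. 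Consequently it suffices to force $|S|\le n/4$ (say) to conclude $m=|L|+|S|\le 2n$.

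For the short increments, Lemma \ref{copy}(ii) gives the pointwise lower bound $s_x(v_{i+1}-v_i)\ge (C_1/2)|x|^{\chi'}\log|x|$, so the task reduces to an upper bound of the form $O(n\,|x|^{\chi'}\log|x|)$, with a sufficiently small implicit constant, on $\sum_{i\in S}s_x(v_{i+1}-v_i)$. Along the geodesic one has the deterministic identity $T(0,nx)=\sum_i T(v_i,v_{i+1})$, and subtracting $g_x(v_{i+1}-v_i)$ termwise yields
\[
\sum_{i=0}^{m-1}\bigl(T(v_i,v_{i+1})-g_x(v_{i+1}-v_i)\bigr)=T(0,nx)-n\,g(x).
\]
Each summand on the left decomposes as $s_x(v_{i+1}-v_i)+[T(v_i,v_{i+1})-h(v_{i+1}-v_i)]$. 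Using \eqref{up1} globally to control the fluctuation of $T(0,nx)$ about $h(nx)$, and locally to control each $T(v_i,v_{i+1})-h(v_{i+1}-v_i)$, the non-$s_x$ contributions can be kept below the target, leaving the desired upper bound on $\sum_i s_x(v_{i+1}-v_i)$ and hence on $|S|$.

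The main obstacle is that the skeleton vertices $v_i$ are themselves random variables, depending intricately on the environment, so the local fluctuation terms cannot be controlled by naive term-wise expectations. The remedy, following Alexander, is to replace expectations by a union bound: since every increment $v_{i+1}-v_i$ lies in the fixed finite set $Q_x\cap\zz^d$, and $m=O(n)$, the number of possible skeletons (up to translation) is at most exponential in $n$. The sub-exponential concentration encoded in \eqref{up1}, which is strong enough thanks to the moment generating function hypothesis on the edge weights, makes this union bound converge and produces a single event of positive probability on which all relevant local fluctuations are simultaneously controlled. Combining the two cardinality bounds on this event finishes the proof.
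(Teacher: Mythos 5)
Your overall architecture matches the paper's (which is Alexander's): form the $Q_x$-skeleton, bound $\sum_i \ee T(v_i,v_{i+1})$ from above using the passage time of the path plus a fluctuation estimate obtained by a union bound over skeletons, then use Lemma \ref{copy}(ii) to bound $|S|$ and Lemma \ref{copy}(iii) (or the $g_x$-identity) to bound $|L|$. But there is a genuine gap at the heart of the fluctuation step. For a \emph{fixed} skeleton $(v_0,\ldots,v_m)$ you need a bound of the shape
\[
\pp\Bigl(\sum_{i=0}^{m-1}\bigl(\ee T(v_i,v_{i+1})-T(v_i,v_{i+1})\bigr)\ge C\,m\,|x|^{\chi'}\log|x|\Bigr)\le e^{-c\,m\log|x|}
\]
with $c$ large enough to beat the $(C|x|^d)^m$ count of skeletons with $m+1$ vertices. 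The variables $T(v_i,v_{i+1})$ are \emph{not} independent --- they are functions of one and the same environment --- so you cannot multiply the exponential moment bounds coming from \eqref{up1}. Applying \eqref{up1} term-wise together with H\"older or Markov only yields a per-skeleton bound of order $e^{-c\log|x|}$, with no factor of $m$ in the exponent, and the union bound then diverges. The paper's proof resolves exactly this point by invoking Kesten's inequality \cite[equation (4.13)]{kesten86} (equivalently \cite[Theorem 2.3]{alexander93}): the downward deviation of $T(0,v_m;(v_j))$, the minimal passage time among paths with the given skeleton, is stochastically dominated by a sum of \emph{independent} copies $Y_i'$ of the increment fluctuations; only then does \eqref{yp} produce the factor $C_5^m e^{-C_4 t/|x|^{\chi'}}$ that the union bound needs. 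Your proposal never supplies this decoupling device, and without it the sentence ``the sub-exponential concentration encoded in \eqref{up1} makes this union bound converge'' is not justified.

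Two smaller points. First, your entropy count ``$m=O(n)$, so the number of skeletons is at most exponential in $n$'' is circular: $m\le 2n$ is precisely the conclusion, and a priori only $m\ge n$ is known, so the union bound must run over all $m\ge 1$ with per-skeleton decay exponential in $m\log|x|$ --- which again requires the comparison inequality above. (You also need the deterministic upper bound $T(0,nx)\le ng(x)+n$ for large $n$, which the paper gets from concentration around $ng(x)$, not merely fluctuations about $h(nx)$.) Second, a numerical slip: with your bound $|L|\le \tfrac65 n+\tfrac{12}{5}|S|$ the threshold $|S|\le n/4$ gives $m\approx 2.05n$, exceeding $2n$; this is harmless (take $|S|\le n/5$, say, or argue as the paper does via $\sum\ee T(v_i,v_{i+1})\ge \tfrac56|L|\,g(x)$), but it should be fixed.
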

\begin{proof}
Let $(v_0,\ldots, v_m)$ be a $Q_x$-skeleton of some lattice path and let 
\[
Y_i := \ee T(v_i, v_{i+1}) - T(v_i, v_{i+1}). 
\]
Then by \eqref{up1} of Theorem \ref{kpzthm} and Lemma \ref{copy}(i), there are constants $C_4 := \alpha/(2d)^{\chi'} \ge \alpha/2d$ and $C_5$  such that for $0\le i\le m-1$, 
\begin{equation}\label{yp}
\ee\exp(C_4 |Y_i|/|x|^{\chi'}) \le C_5. 
\end{equation}
Let $Y_0', Y_1',\ldots, Y_{m-1}'$ be independent random variables with $Y_i'$ having the same distribution as $Y_i$. Let $T(0, w; (v_j))$ be the minimum passage time among all lattice paths from $0$ to a site $w$ with $Q_x$-skeleton $(v_j)$. By \cite[equation (4.13)]{kesten86} or \cite[Theorem 2.3]{alexander93}, for all $t\ge 0$,
\[
\pp\biggl(\sum_{i=0}^{m-1} Y_i' \ge t\biggr) \ge \pp\biggl(\sum_{i=0}^{m-1} \ee T(v_i, v_{i+1}) - T(0, v_m; (v_j)) \ge t\biggr). 
\]
Now by \eqref{yp},
\begin{align*}
\pp\biggl(\sum_{i=0}^{m-1} Y_i' \ge t\biggr) &\le  e^{-C_4 t/|x|^{\chi'}} C_5^m. 
\end{align*}
Let $C_6 := 20 d^2/\alpha$. Taking $t = C_6 m |x|^{\chi'}\log|x|$, the above display shows that there is a constant $C_7$ such that for all $|x| \ge C_7$, 
\begin{align*}
\pp\biggl(\sum_{i=0}^{m-1} \ee T(v_i, v_{i+1}) - T(0, v_m; (v_j)) \ge C_6m |x|^{\chi'}\log|x|\biggr) &\le (C_5e^{- 10 d \log |x|})^m.
\end{align*}
From the definition of a $Q_x$-skeleton, it is easy to see that there is a constant $C_8$ such that there are at most $(C_8 |x|^d)^m$ $Q_x$-skeletons with $m+1$ vertices.  Therefore, the above display shows that there are constants $C_9$ and $C_{10}$ such that when $|x| \ge C_9$,
\begin{align*}
&\pp\biggl(\sum_{i=0}^{m-1} \ee T(v_i, v_{i+1}) - T(0, v_m; (v_j)) \ge C_6m |x|^{\chi'}\log|x|\\
&\qquad \qquad \text{ for some $Q_x$-skeleton with $m+1$ vertices}\biggr) \le e^{-C_{10} m \log |x|}.
\end{align*}
This in turn yields that for some constant $C_{11}$, for all $|x|\ge C_{11}$,
\begin{equation}\label{alexmain}
\begin{split}
&\pp\biggl(\sum_{i=0}^{m-1} \ee T(v_i, v_{i+1}) - T(0, v_m; (v_j)) \ge C_6m |x|^{\chi'}\log|x|\\
&\qquad \text{ for some $m \ge 1$ and some $Q_x$-skeleton with $m+1$ vertices}\biggr) \\
&\qquad \qquad \le 2e^{-C_{10} \log |x|}.
\end{split}
\end{equation}
Now let $\omega := \{t_e: e \text{ is an edge in } \zz^d\}$ be a fixed configuration of passage times (to be further specified later) and let $(v_0,\ldots, v_m)$ be the $Q_x$-skeleton of a route from $0$ to $nx$. Then since $v_{i+1}-v_i\in Q_x$, 
\[
m g(x) \ge \sum_{i=0}^{m-1} g_x(v_{i+1}-v_i) = g_x(nx) = n g(x). 
\]
Therefore 
\begin{equation}\label{nm}
n\le m. 
\end{equation}
From the concentration of first-passage times,  
\[
\pp(T(0,nx) \le n g(x)+n) \ra 1 \ \text{ as } n \ra\infty, 
\]
so by \eqref{alexmain} if $n$ is large there exists a configuration $\omega$ and a $Q_x$-skeleton $(v_0,\ldots, v_m)$ of a path from $0$ to $nx$ such that 
\begin{align}\label{ttg}
T(0, nx; (v_j)) = T(0,nx) \le ng(x)+n
\end{align}
and 
\begin{align}\label{ttg1}
\sum_{i=0}^{m-1} \ee T(v_i, v_{i+1}) - T(0, nx; (v_j)) < C_6m |x|^{\chi'}\log|x|. 
\end{align}
Thus for some constant $C_{12}$, if $|x|\ge C_{12}$ then by \eqref{nm}, \eqref{ttg} and \eqref{ttg1},
\begin{equation}\label{tvv}
\begin{split}
\sum_{i=0}^{m-1} \ee T(v_i, v_{i+1}) &< ng(x) + n + C_6m |x|^{\chi'}\log|x|\\
&\le n g(x) + 2C_6m |x|^{\chi'}\log|x|. 
\end{split}
\end{equation}
But by \eqref{svi}, 
\begin{align*}
\sum_{i=0}^{m-1} \ee T(v_i, v_{i+1}) &= \sum_{i=0}^{m-1}(g_x(v_{i+1}-v_i) + s_x(v_{i+1}-v_i)) \\
&\ge g_x(nx) + C_1 |S((v_i))| |x|^{\chi'} (\log|x|)/2,
\end{align*}
which, together with \eqref{tvv}, yields
\begin{equation}\label{sineq}
|S((v_i))| \le 4C_6m/C_1 = m/4. 
\end{equation}
At the same time, using Lemma \ref{copy}(iii), 
\begin{align*}
\sum_{i=0}^{m-1} \ee T(v_i, v_{i+1}) &= \sum_{i=0}^{m-1}(g_x(v_{i+1}-v_i) + s_x(v_{i+1}-v_i)) \\
&\ge 5|L((v_i))| g(x)/6. 
\end{align*}
With \eqref{tvv}, \eqref{ge} and the assumption that $\chi' < 1$, this implies that there is a constant $C_{13}$ such that, provided $|x|\ge C_{13}$, 
\[
|L((v_i))| \le 6n/5 + \frac{12C_6 m|x|^{\chi'}\log|x|}{6g(e_1) |x|/\sqrt{d}} \le 6n/5 + m/8. 
\]
This and \eqref{sineq} give
\[
m = |L((v_i))| + |S((v_i))| + 1\le 6n/5 + 3m/8 + 1,
\]
which, for $n$ large, implies $m\le 2n$, proving the Proposition. 
\end{proof}
\begin{proof}[Proof of Theorem \ref{gapthm}]
Lemma \ref{alexander} and Proposition \ref{copy2} prove the conclusion of Theorem \ref{gapthm} for $x$ with sufficiently large Euclidean norm. To prove this for all $x$ with $|x|>1$, one simply has to increase the value of $C$. 
\end{proof}

\section{Curvature bounds}
The unit ball of the $g$-norm, usually called the `limit shape' of first-passage percolation, is an object of great interest and intrigue in this literature. Very little is known rigorously about the limit shape, except for a fundamental result about convergence to the limit shape due to Cox and Durrett \cite{cd81}, some qualitative results of Kesten \cite{kesten86} who proved, in particular, that the limit shape may not be an Euclidean ball, an important result of Durrett and Liggett \cite{dl81} who showed that the boundary of the limit shape may contain straight lines, and some bounds on the rate of convergence to the limit shape \cite{kesten93, alexander97}. In particular, it is not even known whether the limit shape may be strictly convex in every direction (except for the related continuum model of `Riemannian first-passage percolation' \cite{lw10} and first-passage percolation with stationary ergodic edge-weights \cite{hm95}). 

The following Proposition lists two properties of the limit shape that are crucial for our purposes.
\begin{prop}\label{curve}
Let $g$ be defined as in \eqref{gdef} and assume that the distribution of edge-weights is continuous. Then there exists $x_0\in \rr^d$ with $|x_0| =1$, a constant $C\ge 0$ and a hyperplane $H_0$ through the origin perpendicular to $x_0$ such that for all $z\in H_0$,
\[
|g(x_0+z) - g(x_0)| \le C|z|^2. 
\]
There also exists $x_1\in \rr^d$ with $|x_1|=1$ and a hyperplane $H_1$ through the origin perpendicular to $x_1$ such that for all $z\in H_1$,
\[
g(x_1+z) \ge \sqrt{ 1+|z|^2}  g(x_1). 
\]
\end{prop}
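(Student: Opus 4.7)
The plan is to extract both bounds from elementary convex geometry applied to the unit ball $B_0 := \{y \in \rr^d : g(y) \le 1\}$ of the norm $g$. By the comparability estimate \eqref{ge}, $B_0$ is a convex compact body containing $0$ in its interior, so the continuous function $y \mapsto |y|$ attains its minimum and maximum on $\partial B_0$ at points $p_-$ and $p_+$ respectively. Setting $r := |p_-|$ and $R := |p_+|$ gives $0 < r \le R < \infty$ together with the Euclidean sandwich $\{y : |y| \le r\} \subseteq B_0 \subseteq \{y : |y| \le R\}$. By positive $1$-homogeneity of $g$, these inclusions are equivalent to the pointwise bounds
\[
|y|/R \le g(y) \le |y|/r \quad \text{for all } y \in \rr^d,
\]
and the rest of the proposition is a direct computation.

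For the second statement I would take $x_1 := p_+/R$, so $|x_1| = 1$ and $g(x_1) = 1/R$. For $z \in H_1 := x_1^\perp$, orthogonality $\langle z, x_1\rangle = 0$ gives $|x_1 + z| = \sqrt{1+|z|^2}$, and the lower bound $g(y) \ge |y|/R$ produces
\[
g(x_1 + z) \ge \frac{|x_1 + z|}{R} = \frac{\sqrt{1+|z|^2}}{R} = \sqrt{1 + |z|^2}\, g(x_1),
\]
exactly as required.

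For the first statement I would take $x_0 := p_-/r$ and $H_0 := x_0^\perp$, so $|x_0| = 1$ and $g(x_0) = 1/r$. The upper pointwise estimate gives $g(x_0 + z) \le |x_0 + z|/r = \sqrt{1+|z|^2}/r \le 1/r + |z|^2/(2r)$ via $\sqrt{1+u} \le 1 + u/2$, so $g(x_0 + z) - g(x_0) \le |z|^2/(2r)$. The matching lower bound $g(x_0 + z) \ge g(x_0)$ needs the observation that the outer normal to $B_0$ at the nearest boundary point $p_-$ must be parallel to $p_-$ itself (even when $\partial B_0$ is not smooth at $p_-$): if $n$ is any outer normal at $p_-$, then the inclusion $\{y : |y| \le r\} \subseteq B_0$ forces $\langle y, n\rangle \le \langle p_-, n\rangle$ for every $|y| \le r$, and evaluating at $y = r n/|n|$ combined with Cauchy-Schwarz pins $\langle p_-, n\rangle = r|n|$, i.e.\ $n \parallel p_-$. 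Consequently $B_0 \subseteq \{y : \langle y, x_0\rangle \le r\}$, so $g(y) \ge \langle y, x_0\rangle/r$ pointwise, and evaluating at $y = x_0 + z$ with $z \perp x_0$ returns $g(x_0 + z) \ge 1/r = g(x_0)$. The whole proposition is really a soft fact about arbitrary convex bodies with the origin in their interior: no input from first-passage percolation enters beyond the norm equivalence \eqref{ge} which guarantees $0 < r \le R < \infty$, and the only mildly subtle point is the Cauchy-Schwarz determination of the normal direction at the possibly non-smooth boundary point $p_-$.
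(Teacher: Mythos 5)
Your proof is correct and follows essentially the same route as the paper: both arguments sandwich the $g$-unit ball between inscribed and circumscribed Euclidean balls, pick $x_0$ and $x_1$ at the touching points, and use the supporting hyperplane there (your Cauchy--Schwarz determination of the normal at $p_-$ is just the paper's observation that the tangent hyperplane to the $g$-ball at the contact point is also tangent to the Euclidean ball, hence perpendicular to the radius). The pointwise bounds $|y|/R \le g(y) \le |y|/r$ are equivalent to the paper's rescaling of $(x_i+z)/|x_i+z|$ onto the Euclidean sphere, so no new ideas are needed and none are missing.
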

\begin{proof}
The proof is similar to that of \cite[Lemma 5]{np95}. 
Let $B(0,r)$ denote the Euclidean ball of radius $r$ centered at the origin and let 
\[
B_g(0,r) := \{x: g(x)\le r\}
\]
denote the ball of radius $r$ centered at the origin for the norm $g$. Let $r$ be the smallest number such that $B_g(0,r)\supseteq B(0,1)$. Let $x_0$ be a point of intersection of $\partial B_g(0,r)$ and $\partial B(0,1)$. Let $H_0$ be a hyperplane tangent to $\partial B_g(0,r)$ at $x_0$, translated to contain the origin. Note that $x_0+H_0$ is also a tangent hyperplane for $B(0,1)$ at $x_0$, since it touches $B(0,1)$ only at~$x_0$. Therefore $H_0$ is perpendicular to $x_0$. Now for any $z\in H_0$,  the point $y := (x_0+z)/|x_0+z|$ is a point on $\partial B(0,1)$ and hence contained in $B_g(0,r)$. Therefore 
\[
g(x_0)=r \ge g(y) = \frac{1}{|x_0+z|} g(x_0+z)= \frac{1}{\sqrt{1+|z|^2}} g(x_0+z). 
\]
Since $g(x_0+z)$ grows like $|z|$ as $|z|\ra\infty$, this shows that there is a constant $C$ such that 
\[
g(x_0+z) \le g(x_0)+C|z|^2
\]
for all $z\in H_0$. Also, since $x_0+z\not\in B_g(0,r)$ for $z\in H_0\backslash\{0\}$, therefore $g(x_0)\le g(x_0+z)$ for all $z\in H_0$. This proves the first assertion of the Proposition.

For the second, we proceed similarly. Let $r$ be the largest number such that $B_g(0,r) \subseteq B(0,1)$.  Let $x_1$ be a point in the intersection of $\partial B_g(0,r)$ and $\partial B(0,1)$. Let $H_1$ be the hyperplane tangent to $\partial B(0,1)$ at $x_1$, translated to contain the origin. Note that this is simply the hyperplane through the origin that is perpendicular to $x_1$. Since $B(0,1)$ contains $B_g(0,r)$, and $y:= (x_1+z)/|x_1+z|$ is a point in $\partial B(0,1)$, therefore
\[
g(x_1) =r \le g(y) = \frac{1}{|x_1+z|} g(x_1 + z) = \frac{1}{\sqrt{1 + |z|^2}} g(x_1+z). 
\]
This completes the argument.
\end{proof}

\section{Proof of $\chi_a \ge 2\xi_b -1 $}\label{easypart}
We will prove by contradiction. Suppose that $2\xi_b-1 > \chi_a$. Choose $\xi'$ such that
\begin{equation*}\label{cxx}
\frac{1+\chi_a}{2} < \xi' < \xi_b.
\end{equation*}
Note that $\xi'< 1$. Let $x_1$ and $H_1$ be  as in Proposition \ref{curve}. Let $n$ be a positive integer, to be chosen later. Throughout this proof, $C$ will denote any positive constant that does not depend on $n$. The value of $C$ may change from line to line. Also, we will assume without mention that `$n$ is large enough' wherever required. 

Let $y$ be the closest point in $\zz^d$ to $nx_1$. Note that 
\begin{equation}\label{ynx}
|y-nx_1|\le \sqrt{d}.
\end{equation}
Let $L$ denote the line passing through $0$ and $nx_1$ and let $L'$ denote the line segment joining $0$ to $nx_1$ (but not including the endpoints). Let $V$ be the set of all points in $\zz^d$ whose distance from $L'$ lies in the interval $[n^{\xi'}, 2n^{\xi'}]$. Take any $v\in V$.  We claim that there is a constant $C$ (not depending on $n$) such that for any $v\in V$,
\begin{align}\label{ggg}
g(v)+ g(nx_1-v) \ge g(nx_1) + C n^{2\xi'-1}. 
\end{align}
Let us now prove this claim. Let $w$ be the projection of $v$ onto $L$ along $H_1$ (i.e.\ the perpendicular projection). To prove \eqref{ggg}, there are three cases to consider. First suppose that $w$ lies in $L'$. Note that $w/|w| = x_1$. Let $v' := v/|w|$ and $z := v'- x_1 = (v-w)/|w|$.  
\begin{figure}[h]
\begin{pdfpic}
\begin{pspicture}(-.5,-.5)(10,3)
\psset{xunit=1cm,yunit=1cm}
\rput(3.2, .15){\small $w$}
\rput(0,.2){\small $0$}
\rput(9,.2){\small $nx_1$}
\psline{*-*}(0,0)(9,0)
\rput(3.2, 2.4){\small $v$}
\psline[linestyle=dashed]{*-*}(3, 2.5)(3,0)
\psline[linestyle=dashed]{-}(3,2.5)(0,0)
\psline[linestyle=dashed]{*-*}(.75, .625)(.75,0)
\rput(.75, .9){\small $v'$}
\rput(.95,.15){\small $x_1$}
\end{pspicture}
\end{pdfpic}
\caption{The relative positions of $x_1, v', v, w, nx_1$. }
\label{fig}
\end{figure}
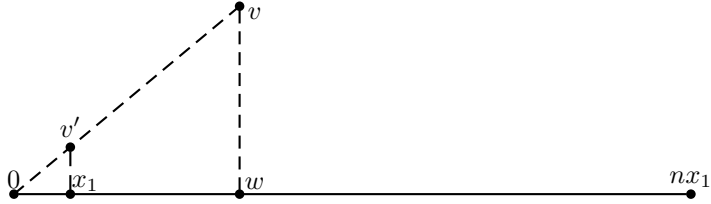

Note that $z\in H_1$. Thus by Proposition \ref{curve}, 
\[
g(v') = g (x_1 + z) \ge \sqrt{1+|z|^2} g(x_1).
\]
Consequently, 
\begin{equation}\label{gv1}
g(v) \ge |w|\sqrt{1+|z|^2} g(x_1). 
\end{equation}
Next, let $w' := nx_1 - w$. Note that $w'/|w'| = x_1$. let $v'' := (nx_1 - v)/|w'|$, and
\[
z' := v'' - x_1 = (w-v)/|w'|. 
\]
Then $z' \in H_1$, and hence by Proposition \ref{curve},
\[
g(v'') = g(x_1 + z') \ge \sqrt{1+|z'|^2} g(x_1). 
\]
Consequently,
\begin{equation}\label{gv2}
g(nx_1 - v) \ge |w'|\sqrt{1+|z'|^2}g(x_1). 
\end{equation}
Since $v\in V$, therefore $|v-w| \ge n^{\xi'}$. Again, $|w|+|w'| = n$. Thus,
\[
\min\{|z|, |z'|\} \ge n^{\xi'-1}.
\] 
Combining this with \eqref{gv1}, \eqref{gv2}, \eqref{ge}  and the fact that $\xi' < 1$, we have  
\begin{align*}
g(v) + g(nx_1 - v) &\ge (|w|+ |w'|) \sqrt{1+n^{2\xi'-2}}g(x_1)\\
&= \sqrt{1+n^{2\xi'-2}} g(nx_1)\\
&\ge g(nx_1) + Cn^{2\xi' -1}.
\end{align*}
Next, suppose that $w$ lies in $L\backslash L'$, on the side closer to $nx_1$. As above, let $z := (v-w)/|w|$. As in \eqref{gv1}, we conclude that
\begin{equation}\label{gvw}
g(v) \ge |w| \sqrt{1+|z|^2} g(x_1). 
\end{equation}
By the definition of $V$, the distance between $v$ and $nx_1$ must be greater than $n^{\xi'}$. But in this case 
\[
|v-nx_1|^2 = (|w|-n)^2 + |v-w|^2 = (|w|-n)^2 + |w|^2|z|^2,
\]
and we also have $n\le |w| \le 3n$. Thus, either $|w|^2|z|^2 > n^{2\xi'}/2$ (which implies $|z|^2\ge Cn^{2\xi'-2}$), or $|w| \ge n + n^{\xi'}/\sqrt{2}$. Since $\xi' > 2\xi'-1$, therefore by \eqref{gvw}, in either situation we have 
\[
g(v)\ge g(nx_1) + Cn^{2\xi' -1}. 
\]
Similarly, if $w$ lies in $L\backslash L'$, on the side closer to $0$, then 
\[
g(nx_1 - v) \ge g(nx_1) + Cn^{2\xi'-1}. 
\]
This completes the proof of \eqref{ggg}. Now \eqref{ggg} combined with Theorem \ref{gapthm},~\eqref{ynx} and the fact that $2\xi'-1 > \chi_a$ implies that if $n$ is large enough, then for any $v\in V$, 
\begin{equation}\label{hhh}
h(v) + h(y-v) \ge h(y) + Cn^{2\xi'-1}.
\end{equation}
Choose $\chi_1, \chi_2$ such that $\chi_a < \chi_1< \chi_2 < 2\xi' -1$. Then by \eqref{up1} of Theorem~\ref{kpzthm}, there is a constant $C$ such that for $n$ large enough,
\[
\pp(T(0,y) > h(y) + n^{\chi_2}) \le e^{-Cn^{\chi_2-\chi_1}}.
\]
Now, for any $v\in V$, both $|v|$ and $|y-v|$ are bounded above by $Cn$. Therefore again by \eqref{up1}, 
\begin{align*}
\pp(T(0,v) < h(v) - n^{\chi_2}) &\le e^{-Cn^{\chi_2-\chi_1}},\\
\pp(T(v,y) < h(y-v) - n^{\chi_2}) &\le e^{-Cn^{\chi_2-\chi_1}}. 
\end{align*}
This, together with \eqref{hhh}, shows that if $n$ is large enough, then for any $v\in V$,
\[
\pp(T(0,y) = T(0,v) + T(v,y)) \le e^{-C n^{\chi_2-\chi_1}}.
\] 
Since the size of $V$ grows polynomially with $n$, this shows that 
\[
\pp(T(0,y) = T(0,v) + T(v,y) \text{ for some } v\in V) \le e^{-C n^{\chi_2-\chi_1}}.
\]
Note that if the geodesic from $0$ to $y$ passes through $V$, then $T(0,y) = T(0,v) + T(v,y)$ for some $v\in V$. If $D(0,y) > n^{\xi'}$ then the geodesic must pass through $V$. Thus, the above inequality implies that
\[
\pp(D(0,y)> n^{\xi'}) \le e^{-C n^{\chi_2-\chi_1}}.
\]
By \eqref{up2} of Theorem \ref{kpzthm}, this gives
\begin{align*}
\ee D(0,y) &\le n^{\xi'} + \ee(D(0,y) 1_{\{D(0,y) >  n^{\xi'}\}})\\
&\le n^{\xi'} + \sqrt{\ee (D(0,y)^2) \pp(D(0,y) > n^{\xi'})}\\
&\le n^{\xi'} + C_1n^{C_1} e^{-C_2 n^{\chi_2-\chi_1}}. 
\end{align*}
Taking $n \ra\infty$, this shows that \eqref{down2} of Theorem \ref{kpzthm} is violated (since $\xi' < \xi_b$), leading to a contradiction to our original assumption that $\chi_a < 2\xi_b -1$. Thus, $\chi_a \ge 2\xi_b -1$.

\section{Proof of $\chi \le 2\xi -1$ when $0 < \chi < 1/2$}\label{012}
In this section and the rest of the manuscript, we assume that $\chi_a=\chi_b$ and $\xi_a = \xi_b$, and denote these two numbers by $\chi$ and $\xi$.

Again we prove by contradiction. Suppose that $0 < \chi < 1/2$ and $\chi > 2\xi -1$. Fix $\chi_1< \chi < \chi_2$, to be chosen later. 
Choose $\xi'$ such that
\[
\xi <  \xi' < \frac{1+\chi}{2}. 
\]
Define:
\begin{align*}
\beta' &:= \frac{1}{2} + \frac{\xi'}{1+\chi}. \\
\beta &:= 1-\frac{\chi}{2} + \frac{\chi}{2}\beta'. \\
\ve &:= (1-\beta)\biggl(1-\frac{\chi}{2}\biggr). 
\end{align*}
We need several inequalities involving the numbers $\beta'$, $\beta$ and $\ve$. Since 
\[
0 < \frac{\xi'}{1+\chi} < \frac{1}{2},
\]
therefore
\begin{equation}\label{b1}
\frac{1}{2}< \beta' < 1.
\end{equation}
Since $\chi < 1$ and $\xi'<(1+\chi)/2 <1$, 
\begin{equation}\label{bpxi}
\beta' > \frac{1}{2} + \frac{\xi'}{2} > \xi'. 
\end{equation}
Since $\beta$ is a convex combination of $1$ and $\beta'$ and $\chi > 0$, 
\begin{equation}\label{bb}
\beta' < \beta < 1. 
\end{equation}
Since $0 < \chi < 1$ and $0<\beta<1$,
\begin{equation}\label{veb}
0 <\ve < 1-\beta. 
\end{equation}
Since $\beta'$ is the average of $1$ and $2\xi'/(1+\chi)\in (0,1)$, therefore $\beta'$ is strictly bigger than $2\xi'/(1+\chi)$ and hence 
\begin{equation}\label{xibp}
\begin{split}
2\xi' - \beta' &< 2\xi' - \frac{2\xi'}{1+\chi}\\
&= \frac{2\xi'}{1+\chi} \chi < \beta' \chi. 
\end{split}
\end{equation}
By \eqref{bb}, this implies that 
\begin{equation}\label{xib}
2\xi' - \beta < 2\xi' - \beta' < \beta' \chi < \beta \chi. 
\end{equation}
Next, by \eqref{b1}, 
\begin{equation}\label{chib}
\begin{split}
1-\beta + \beta'\chi = \frac{\chi}{2}(1+\beta') < \chi. 
\end{split}
\end{equation}
And finally by \eqref{b1},
\begin{equation}\label{chibep}
\beta \chi + 1-\beta - \ve = \beta \chi + (1-\beta) \frac{\chi}{2} < \chi. 
\end{equation}
Let $q$ be a large positive integer, to be chosen later. Throughout this proof, we will assume without mention  that $q$ is `large enough' wherever required. Also, $C$ will denote any constant that does not depend on our choice of $q$, but may depend on all other parameters.

Let $r$ be an integer between $\frac{1}{2}q^{(1-\beta-\ve)/\ve}$ and  $2q^{(1-\beta-\ve)/\ve}$, recalling that by~\eqref{veb}, $1-\beta-\ve > 0$. Let $k= rq$. Let $a$ be a real number between $q^{\beta/\ve}$ and $2q^{\beta/\ve}$. Let $n = ak$. Note that $n= arq$, which gives $\frac{1}{2}q^{1/\ve} \le n\le 4q^{1/\ve}$. From this it is easy to see that there are positive constants $C_1$ and $C_2$, depending only on $\beta$ and $\ve$, such that
\begin{align}
&C_1n^{\ve} \le q \le C_2n^{\ve}, \label{qbd}\\
&C_1n^{1-\beta} \le k\le C_2n^{1-\beta}, \label{kbd}\\
&C_1n^\beta \le a \le C_2n^\beta, \label{abd}\\ 
&C_1n^{1-\beta-\ve} < r < C_2n^{1-\beta-\ve}. \label{rbd}
\end{align}
Let $b := n^{\beta'}$. Note that by \eqref{bb}, $b$ is negligible compared to $a$ if $q$ is large. Note also that, although $r$, $k$ and $q$ are integers, $a$, $n$ and $b$ need not be. 

Let $x_0$ and $H_0$ be as in Proposition~\ref{curve}. For $0\le i\le k$, define
\begin{align*}
U_i' &:= H_0 + i a x_0\ , \\ 
V_i' &:= H_0 + (ia + a-b) x_0\ . 
\end{align*}
Let $U_i$ be the set of points in $\zz^d$ that are within distance $\sqrt{d}$ from $U_i'$. Let $V_i$ be the set of points in $\zz^d$ that are within distance $\sqrt{d}$ from $V_i'$.

For $0\le i\le k$ let $y_i$ be the closest point in $\zz^d$ to $iax_0$, and let $z_i$ be the closest point in $\zz^d$ to $(ia + a-b) x_0$, applying some arbitrary rule to break ties. Note that if $x\in \rr^d$, and $y\in\zz^d$ is closest to $x$, then $|x-y|\le \sqrt{d}$. Therefore $y_i\in U_i$ and $z_i\in V_i$. Figure \ref{fignew} gives a pictorial representation of the above definitions, assuming for simplicity that $U_i=U_i'$ and $V_i=V_i'$.

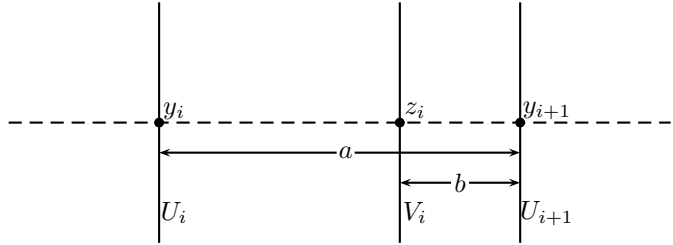
\begin{figure}[h]
\begin{pdfpic}
\begin{pspicture}(-1,-1)(13,3)
\psset{xunit=.8cm,yunit=.8cm}
\psline{-}(4,-1)(4,3)
\psline{-}(8,-1)(8,3)
\psline{-}(10, -1)(10,3)
\psline[linestyle=dashed]{-}(1.5,1)(12.5,1)
\psline{<-}(4,.5)(6.95, .5)
\psline{->}(7.25, .5)(10,.5)
\rput(7.1, .5){\small $a$}
\psline{<-}(8, 0)(8.85,0)
\rput(9,0){\small $b$}
\psline{->}(9.15, 0)(10,0)
\psline{*-*}(4,1)(4,1)
\psline{*-*}(8,1)(8,1)
\psline{*-*}(10,1)(10,1)
\rput(4.25, 1.2){\small $y_i$}
\rput(8.25, 1.2){\small $z_i$}
\rput(10.45, 1.2){\small $y_{i+1}$}
\rput(4.25, -.5){\small $U_i$}
\rput(8.25, -.5){\small $V_i$}
\rput(10.45, -.5){\small $U_{i+1}$}
\end{pspicture}
\end{pdfpic}
\caption{Diagrammatic representation of $y_i$, $z_i$, $U_i$ and $V_i$.}
\label{fignew}
\end{figure}

Let $U_i^o$ be the subset of $U_i$ that is within distance $n^{\xi'}$ from $y_i$. Similarly let $V_i^o$ be the subset of $V_i$ that is within distance $n^{\xi'}$ from $z_i$. 

For any $A, B\subseteq \zz^d$, let $T(A,B)$ denote the minimum passage time from $A$ to $B$.  Let $G(A, B)$ denote the (unique) geodesic from $A$ to $B$, so that $T(A, B)$ is the sum of edge-weights of  $G(A,B)$. 

Fix any two integers $0\le l<m\le k$ such that $m-l > 3$. Consider the geodesic $G := G(y_l, y_m)$. Since $x_0\not \in H_0$, it is easy to see that $G$ must `hit' each $U_i$ and $V_i$, $l\le i\le m-1$. Arranging the vertices of $G$ in a sequence starting at $y_l$ and ending at $y_m$, for each $l\le i< m$ let $u_i'$ be the first vertex in $U_i$ visited by $G$ and let $v_i'$ be the first vertex in $V_i$ visited by $G$. Let $u_m' := y_m$. Note that $G$ visits these vertices in the order $u_l', v_l', u_{l+1}', v_{l+1}', \ldots, v_{m-1}', u_m'$.  Figure \ref{figg} gives a pictorial representation of the points $u_i'$ and $v_i'$ on the geodesic~$G$. 
\begin{figure}[h]
\begin{pdfpic}
\begin{pspicture}(-1,-1)(10,3)
\psset{xunit=1cm,yunit=1cm}
\psline{-}(0, 0)(0, 3)
\psline{-}(0,0)(6.5,0)
\psline{-}(0,3)(6.5,3)
\psline[linestyle=dashed]{-}(6.5,0)(8,0)
\psline[linestyle=dashed]{-}(6.5,3)(8,3)
\pspolygon[fillstyle=solid, fillcolor=lightgray](4,0)(4,3)(6,3)(6,0)
\pscurve{-}(0,1.5)(.2,1.7)(-.2, 1.9)(1,2.6)(3.5,1.6)(4,1.9)(4.8,1.5)(3.7,.8)(3.65,.7)(5, .6)(5.8, 1.1)(6,1.2)(6.1,1.4)(5.7, 1.6)(6, 2)(6.5, 2.2)
\pscurve[linestyle=dashed]{-}(6.5,2.2)(7, 2.3)(7.5,1.8)(8,1.9)
\pscurve{*}(0,1.5)(0,1.5)(0,1.5)
\pscurve{*}(4,1.9)(4,1.9)(4,1.9)
\pscurve{*}(6,1.2)(6,1.2)(6,1.2)
\rput(1.9, 2.6){\small $G$}
\rput(0.3,1.3){\small $u_0'$}
\rput(4.25,1.65){\small $v_0'$}
\rput(6.25, 1.1){\small $u_1'$}
\rput(2,-.19){\small $n^\beta$}
\psline{<-}(0.05,-.3)(1.75, -.3)
\psline{->}(2.15,-.3)(3.95,-.3)
\rput(5,-.17){\small $n^{\beta'}$}
\psline{<-}(4.05,-.3)(4.75, -.3)
\psline{->}(5.15,-.3)(5.95,-.3)
\end{pspicture}
\end{pdfpic}
\caption{Location of $u_0', v_0', u_1', v_1', \ldots$ on the geodesic $G$.}
\label{figg}
\end{figure}
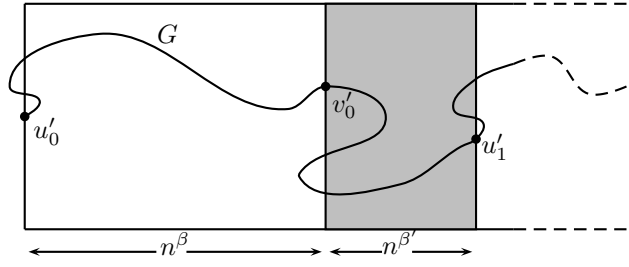
Let $T_i'$ be the sum of edge-weights of the portion of $G$ from $u_i'$ to $v_i'$. Let $E$ be the event that $u_i'\in U_i^o$ and $v_i'\in V_i^o$ for each $i$. If $E$ happens, then clearly 
\[
T_i' \ge T(U_i^o, V_i^o). 
\]
Similarly, note that weight of the part of $G$ from $v_i'$ to $u_{i+1}'$ must exceed or equal $T(v_i', u_{i+1}')$. Therefore, if $E$ happens, then 
\begin{equation}\label{lower}
\begin{split}
T(y_{l}, y_{m}) &\ge \sum_{i=l}^{m-1} T_i' + \sum_{i=l}^{m-1} T(v_i', u_{i+1}')\\
&\ge \sum_{i=l}^{m-1} T(U_i^o, V_i^o) + \sum_{i=l}^{m-1} T(v_i', u_{i+1}'). 
\end{split}
\end{equation}
Next, for each $0\le i< k$ let $G_i := G(U_i^o, V_i^o)$. Let $u_i$ and $v_i$ be the endpoints of $G_i$. Let $G_i' := G(v_i, u_{i+1})$. Figure~\ref{figgg} gives a picture illustrating  the paths $G_i$ and $G_i'$.
\begin{figure}[h]
\begin{pdfpic}
\begin{pspicture}(-1,-1)(10,3)
\psset{xunit=1cm,yunit=1cm}
\pspolygon[fillstyle=solid](0,0)(0,2)(9,2)(9,0)
\pspolygon[fillstyle=solid, fillcolor=lightgray](3,0)(3,2)(2,2)(2,0)
\pspolygon[fillstyle=solid, fillcolor=lightgray](6,0)(6,2)(5,2)(5,0)
\pspolygon[fillstyle=solid, fillcolor=lightgray](9,0)(9,2)(8,2)(8,0)
\psline{-}(9,2)(9.5,2)
\psline[linestyle=dashed]{-}(9.5,2)(10.5,2)
\psline{-}(9,0)(9.5,0)
\psline[linestyle=dashed]{-}(9.5,0)(10.5,0)
\pscurve{-}(9,1)(9.3,1.3)(9.5, 1.2)
\pscurve[linestyle=dashed](9.5,1.2)(9.7,1.1)(10.1,1.3)(10.5,1.3)
\pscurve{*-*}(0,1.5)(1, 1.7)(1.5,.7)(2,1)
\pscurve[linestyle=dashed]{*-*}(2,1,1)(2.2, 1.1)(2.7,1.3)(3,1.5)
\pscurve{*-*}(3,1.5)(4, 1.3)(4.5,1.6)(5,1.3)
\pscurve[linestyle=dashed]{*-*}(5, 1.3)(5.5,1.5)(6,1.4)
\pscurve{*-*}(6,1.4)(6.5,1.3)(7, 1.7)(7.5,.7)(8,.8)
\pscurve[linestyle=dashed]{*-*}(8, .8)(8.3, 1.3)(8.6,.9)(9,1)
\rput(.23,1.33){\small $u_0$}
\rput(2.23,.83){\small $v_0$}
\rput(3.22, 1.24){\small $u_1$}
\rput(5.23, 1.13){\small $v_1$}
\rput(6.23, 1.23){\small $u_2$}
\rput(8.22, .63){\small $v_2$}
\rput(9.22, .83){\small $u_3$}
\rput(1.1, .9){\small $G_0$}
\rput(2.4, 1.45){\small $G_0'$}
\rput(4, 1.12){\small $G_1$}
\rput(5.4, 1.67){\small $G_1'$}
\rput(7.1, .9){\small $G_2$}
\rput(8.3, 1.47){\small $G_2'$}
\rput(1,-.2){\small $n^\beta$}
\psline{<-}(0.05,-.3)(.7, -.3)
\psline{->}(1.1,-.3)(1.95,-.3)
\rput(2.6,-.2){\small $n^{\beta'}$}
\psline{<-}(2.05,-.3)(2.35, -.3)
\psline{->}(2.7,-.3)(2.95,-.3)
\end{pspicture}
\end{pdfpic}
\caption{The paths $G_0, G_0', G_1, G_1',\ldots$.}
\label{figgg}
\end{figure}
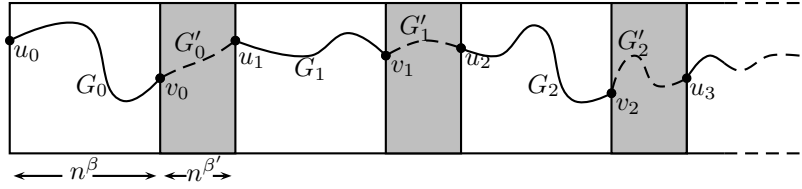
The concatenation of the paths $G(y_{l}, v_{l})$, $G_l'$, $G_{l+1}$, $G_{l+1}'$, $G_{l+2}$, $\ldots$, $G_{m-2}'$, $G_{m-1}$, $G(v_{m-1}, y_{m})$ is a path from $y_{l}$ to $y_{m}$ (need not be self-avoiding). Therefore,
\begin{equation}\label{upper}
\begin{split}
T(y_{l}, y_{m}) &\le T(y_{l}, v_{l})  + \sum_{i=l+1}^{m-1} T(U_i^o, V_i^o) + \sum_{i=l}^{m-2} T(v_i, u_{i+1})\\
&\qquad +  T(v_{m-1}, y_{m}). 
\end{split}
\end{equation}
Define 
\[
\Delta_{l,m} := T(y_l, y_m) -\sum_{i=l}^{m-1} (T(U_i^o, V_i^o) + T(V_i^o, U_{i+1}^o)). 
\]
Combining \eqref{lower} and \eqref{upper} implies that if $E$ happens, then  
\begin{equation*}\label{big}
\begin{split}
|\Delta_{l,m}| &\le \sum_{i=l}^{m-1} |T(V_i^o, U_{i+1}^o) - T(v_i', u_{i+1}')|  + \sum_{i=l}^{m-2} |T(V_i^o, U_{i+1}^o) - T(v_i, u_{i+1})| \\
&\qquad + |T(U_l^o, V_l^o) - T(y_{l}, v_{l})|  + |T(V_{m-1}^o, U_{m}^o)- T(v_{m-1}, y_{m})|.
\end{split}
\end{equation*}
Thus, if 
\begin{align*}
M_i := \max_{v,v'\in V_i^o, \ u,u'\in U_{i+1}^o} |T(v,u) - T(v', u')|,\\
N_i :=  \max_{u,u'\in U_i^o, \ v,v'\in V_{i}^o} |T(u,v) - T(u', v')|,
\end{align*}
and the event $E$ happens, then 
\begin{equation}\label{big2}
\begin{split}
|\Delta_{l,m}| &\le 2\sum_{i=l}^{m-1} M_i + N_l.
\end{split}
\end{equation}
For a random variable $X$, let $\|X\|_p := (\ee|X|^p)^{1/p}$ denote its $L^p$ norm. It is easy to see that  $\|\Delta_{l,m}\|_4\le n^C$, where recall that $C$ stands for any constant that does not depend on our choice of the integer $q$, but may depend on $\chi$, $\xi$, $\xi'$ and the distribution of edge weights. Take any $\xi_1 \in (\xi, \xi')$. By \eqref{up2} of Theorem~\ref{kpzthm}, $\pp(E^c) \le e^{-Cn^{\xi'-\xi_1}}$. Together with \eqref{big2}, this shows that for some constants $C_3$ and $C_4$, 
\begin{equation}\label{big3}
\begin{split}
\|\Delta_{l,m}\|_2 &\le \|\Delta_{l,m} 1_{E^c}\|_2 + \|\Delta_{l,m} 1_E\|_2\\
&\le \|\Delta_{l,m}\|_4 (\pp(E^c))^{1/4} + \|\Delta_{l,m} 1_E\|_2\\
&\le n^{C_3} e^{-C_4n^{\xi'-\xi_1}} + 2\sum_{i=l}^{m-1} \|M_i\|_2 + \|N_l\|_2. 
\end{split}
\end{equation}
Fix $0\le i\le k-1$ and $v\in V_i^o$, $u\in U_{i+1}^o$. Let $x$ be the nearest point to $v$ in $V_i'$ and $y$ be the nearest point to $u$ in $U_{i+1}'$. Then by definition of $V_i'$ and $U_{i+1}'$, there are vectors $z, z'\in H_0$ such that $|z|$ and $|z'|$ are bounded by $Cn^{\xi'}$, and $x = (ia+a-b) x_0+ z$ and $y = (ia+a)x_0 + z'$. Thus by Proposition \ref{curve},
\begin{align*}
|g(y-x) - g(bx_0)| &= |g(b x_0 + z'-z) - g(bx_0)| \\
&= b |g(x_0 + (z'-z)/b) - g(x_0)|\\
&\le \frac{C|z'-z|^2}{b} \le Cn^{2\xi' - \beta'}. 
\end{align*}
Thus, for any $v,v'\in V_i^o$ and $u,u'\in U_{i+1}^o$, 
\[
|g(u-v)-g(u'-v')| \le Cn^{2\xi'-\beta'}.
\]
Note also that $|y-x|\le C(n^{\beta'} + n^{\xi'}) \le Cn^{\beta'}$ by \eqref{bpxi}.  This, together with Theorem \ref{gapthm}, shows that for any $v,v'\in V_i^o$ and $u,u'\in U_{i+1}^o$, 
\begin{equation*}
|\ee T(v,u) - \ee T(v',u')| \le Cn^{2\xi' - \beta'} + Cn^{\beta' \chi_2}\log n. 
\end{equation*}
By \eqref{xibp}, this implies
\begin{equation}\label{tvu}
|\ee T(v,u) - \ee T(v',u')| \le Cn^{\beta' \chi_2}\log n.
\end{equation}
Let
\[
M := \max_{v\in V_i^o, \ u\in U_{i+1}^o} \frac{|T(v,u)- \ee T(v,u)|}{|u-v|^{\chi_2}}. 
\]
By \eqref{up1} of Theorem \ref{kpzthm},
\begin{align*}
\ee (e^{\alpha M}) &\le \sum_{v\in V_i^o, \ u\in U_{i+1}^o} \ee\exp\biggl(\alpha \frac{|T(v,u)- \ee T(v,u)|}{|u-v|^{\chi_2}}\biggr)\\
&\le C |V_i^o| |U_{i+1}^o|\le Cn^C.  
\end{align*}
This implies that $\pp(M > t) \le Cn^C e^{-\alpha t}$, which in turn gives $\|M\|_2\le C \log n$. Let
\[
M' := \max_{v\in V_i^o, \ u\in U_{i+1}^o} |T(v,u)- \ee T(v,u)|.
\]
Since by \eqref{bpxi}, $|u-v| \le C(n^{\beta'}+n^{\xi'})\le Cn^{\beta'}$ for all $v\in V_i^o$, $u\in U_{i+1}^o$, therefore $M' \le Cn^{\beta'\chi_2} M$. Thus,
\[
\|M'\|_2\le Cn^{\beta' \chi_2}\log n. 
\]
From this and \eqref{tvu} it follows that
\[
\|M_i\|_2 \le Cn^{\beta' \chi_2} \log n. 
\]
By an exactly similar sequence of steps, replacing $\beta'$ by $\beta$ everywhere and using \eqref{xib} instead of \eqref{xibp}, one can deduce that 
\[
\|N_i\|_2\le Cn^{\beta \chi_2}\log n. 
\]
Combining with \eqref{big3} this gives
\begin{equation}\label{delta}
\|\Delta_{l,m}\|_2 \le Cn^{\beta\chi_2} \log n + C (m-l) n^{\beta'\chi_2} \log n, 
\end{equation}
since the exponential term in \eqref{big3} is negligible compared to the rest.

Now, from the definition of $\Delta_{l,m}$, the fact that $k=rq$, and the triangle inequality, it is easy to see that 
\begin{align*}
\biggl| T(y_0, y_k) - \sum_{j=0}^{r-1} T(y_{jq}, y_{(j+1)q})\biggr| &\le |\Delta_{0,k}| + \sum_{j=0}^{r-1} |\Delta_{jq, (j+1)q}|. 
\end{align*}
Thus by \eqref{delta}, \eqref{rbd} and \eqref{kbd}, 
\begin{equation}\label{tt}
\begin{split}
\biggl\| T(y_0, y_k) &- \sum_{j=0}^{r-1} T(y_{jq}, y_{(j+1)q})\biggr\|_2 \le \|\Delta_{0,k}\|_2 + \sum_{j=0}^{r-1} \|\Delta_{jq, (j+1)q}\|_2\\
&\le C (r+1) n^{\beta\chi_2} \log n + Ck n^{\beta' \chi_2} \log n\\
&\le Cn^{1-\beta-\ve + \beta\chi_2}\log n + C n^{1-\beta + \beta' \chi_2}\log n.   
\end{split}
\end{equation}
For any two random variables $X$ and $Y$, 
\begin{align}
\bigl|\sqrt{\var(X)} - \sqrt{\var(Y)}\bigr| &= |\|X- \ee X\|_2 - \|Y-\ee Y\|_2| \nonumber \\
&\le \|(X- \ee X) - (Y-\ee Y)\|_2 \nonumber \\
&\le \|X-Y\|_2 + |\ee X-\ee Y| \le 2\|X-Y\|_2. \label{referee1}
\end{align}
Therefore it follows from \eqref{tt} that 
\begin{equation}\label{imp1}
\begin{split}
\biggl|(\var T(y_0, y_k))^{1/2} &- \biggl( \var \sum_{j=0}^{r-1} T(y_{jq}, y_{(j+1)q})\biggr)^{1/2}\biggr| \\
&\le Cn^{1-\beta-\ve + \beta\chi_2}\log n + C n^{1-\beta + \beta' \chi_2}\log n. 
\end{split}
\end{equation}
For any $x,y\in \zz^d$, $T(x,y)$ is an increasing function of the edge weights. So by the Harris-FKG  inequality \cite{harris60}, $\cov(T(x,y), T(x', y')) \ge 0$ for any $x,y,x',y'\in \zz^d$. Therefore by \eqref{down1} of Theorem \ref{kpzthm} and \eqref{abd}, \eqref{rbd} and \eqref{qbd}, 
\begin{equation}\label{imp2}
\begin{split}
\var \sum_{j=0}^{r-1} T(y_{jq}, y_{(j+1)q}) &\ge \sum_{j=0}^{r-1}\var  T(y_{jq}, y_{(j+1)q})\\
&\ge C \sum_{j=0}^{r-1} |y_{jq}- y_{(j+1)q}|^{2\chi_1}\\
&\ge C r (aq)^{2\chi_1} \ge C n^{(1-\beta-\ve) + (\beta+\ve) 2\chi_1}. 
\end{split}
\end{equation}
By the inequalities \eqref{chib} and \eqref{chibep}, we see that if $\chi_1$ and $\chi_2$ are chosen sufficiently close to $\chi$, then $\chi_1$ is strictly bigger than both $1-\beta-\ve +\beta \chi_2$ and $1-\beta+\beta'\chi_2$. Therefore by \eqref{imp1} and \eqref{imp2},  and since $1-\beta - \ve + (\beta+\ve)2\chi_1 > 2\chi_1$, 
\[
\var T(y_0, y_k) \ge C n^{(1-\beta-\ve) + (\beta+\ve) 2\chi_1}.
\]
By \eqref{veb} and the assumption that $\chi < 1/2$, we again have that if $\chi_1$ is chosen sufficiently close to $\chi$,
\[
(1-\beta-\ve) + (\beta+\ve) 2\chi_1 > 2\chi. 
\]
Since $|y_0-y_k|\le Cak \le Cn$ by \eqref{abd} and \eqref{kbd}, therefore taking $q\ra\infty$ (and hence $n\ra\infty$) gives a contradiction to \eqref{up1} of Theorem \ref{kpzthm}, thereby proving that $\chi \le 2\xi -1 $ when $0 < \chi < 1/2$.

\section{Proof of $\chi \le 2\xi -1$ when $\chi = 1/2$}\label{hard2}
Suppose that $\chi=1/2$ and $\chi > 2\xi -1$. Define $\chi_1$, $\chi_2$, $x_0$, $H_0$, $\xi'$, $\beta$, $\beta'$, $\ve$, $q$, $a$, $r$, $k$, $n$, $y_i$ and $z_i$ exactly as in Section \ref{012}, considering $a$, $r$, $k$ and $n$ as functions of $q$. Then all steps go through, except the very last, where we used $\chi < 1/2$ to get a contradiction. Therefore all we need to do is the modify this last step to get a contradiction in a different way. This is where we need the sublinear variance inequality~\eqref{bksineq}. As before, throughout the proof $C$ denotes any constant that does not depend on  $q$.

For each real number $m\ge 1$, let $w_m$ be the nearest lattice point to $mx_0$. Note that $y_i = w_{ia}$. Let 
\[
f(m) := \var T(0, w_m).
\] 
Note that there is a constant $C_0$ such that $f(m) \le C_0m$ for all $m$. Again by \eqref{down1}, there is a $C_1>0$ such that for all $m$, 
\begin{equation}\label{fm}
f(m)\ge C_1m^{2\chi_1}.
\end{equation}
Now, $|(w_{(j+1)aq} - w_{jaq}) - w_{aq}| \le C$. Again, as a consequence of \eqref{referee1} we have that for any two random variables $X$ and $Y$, 
\begin{align}
\bigl|\var(X)-\var(Y)\bigr| &= \bigl|\sqrt{\var(X)} - \sqrt{\var(Y)}\bigr|\bigl(\sqrt{\var(X)} + \sqrt{\var(Y)}\bigr)\nonumber \\
&\le 2\|X-Y\|_2 \bigl(2\sqrt{\var(X)} + 2\|X-Y\|_2).\label{referee2}
\end{align}
By \eqref{referee2} and the subadditivity of first-passage times,
\begin{align*}
\var(T(w_{jaq}, w_{(j+1)aq})) &\ge f(aq) - C\sqrt{f(aq)} - C \\
&\ge f(n/r) - C\sqrt{n/r}. 
\end{align*}
Therefore by the Harris-FKG inequality, 
\begin{equation}\label{lowtt}
\var\biggl(\sum_{j=0}^{r-1} T(w_{jaq}, w_{(j+1)aq})\biggr) \ge r f(n/r) - C\sqrt{nr}. 
\end{equation}
Now, by \eqref{chib} and \eqref{chibep}, if $\chi_2$ is sufficiently close to $\chi$, then  both $1-\beta-\ve + \beta\chi_2$ and $1-\beta+\beta'\chi_2$ are strictly smaller than $1/2$. Therefore by~\eqref{tt}, \eqref{referee2} and the fact that $f(n)\le Cn$,
\begin{align*}
&\biggl|f(n) - \var\biggl(\sum_{j=0}^{r-1} T(w_{jaq}, w_{(j+1)aq})\biggr)\biggr|\\
&\le C\sqrt{n}(n^{1-\beta-\ve + \beta\chi_2}\log n +  n^{1-\beta + \beta' \chi_2}\log n).
\end{align*}
Combining this with \eqref{lowtt} gives
\begin{align*}
f(n) \ge r f(n/r) - C\sqrt{nr} - C\sqrt{n}(n^{1-\beta-\ve + \beta\chi_2}\log n +  n^{1-\beta + \beta' \chi_2}\log n).
\end{align*}
Again by \eqref{rbd} and \eqref{fm}, 
\begin{align*}
r f(n/r) \ge Cn^{(1-\beta-\ve) + (\beta+\ve)2\chi_1}. 
\end{align*} 
Combining \eqref{rbd} with the last two displays, it follows that we can choose $\chi_1$ and $\chi_2$ so close to $1/2$ that as $q\ra\infty$, 
\[
\liminf\frac{f(n)}{rf(n/r)} \ge 1.
\]
In particular, for any $\delta > 0$, there exists an integer $q(\delta)$ such that if $q\ge q(\delta)$, then 
\begin{equation}\label{fineq}
f(n) \ge (1-\delta) r f(n/r). 
\end{equation}
Fix $\delta = (1-\beta-\ve)/2$ and choose $q(\delta)$ satisfying the above criterion. Note that $q(\delta)$ can be chosen as large as we like. Let $m_0:= aq = n/r$ and $m_1 = n$. The above inequality implies that 
\[
\frac{f(m_1)}{m_1} \ge (1-\delta) \frac{f(m_0)}{m_0}.
\] 
Note that by \eqref{qbd},  if $q(\delta)$ is chosen sufficiently large to begin with, then 
\[
m_1^{\ve/(\beta+\ve)} > Cq^{1/(\beta+\ve)}> q(\delta).
\]
We now inductively define an increasing sequence $m_2, m_3,\ldots$ as follows. Suppose that $m_{i-1}$ has been defined such that 
\begin{equation}\label{qq}
m_{i-1}^{\ve/(\beta+\ve)} > q(\delta).
\end{equation}
Let 
\[
q_i := [m_{i-1}^{\ve/(\beta+\ve)}] + 1,
\]
where $[x]$ denotes the integer part of a real number $x$. By \eqref{qq}, $q_i \ge q(\delta)$. Let $a_i := m_{i-1}/q_i$. Then  if $q(\delta)$ is chosen large enough,
\[
a_i \ge \frac{2}{3}m_{i-1}^{\beta/(\beta+\ve)}\ge \frac{1}{2}q_i^{\beta/\ve},
\]
and 
\[
a_i \le m_{i-1}^{\beta/(\beta+\ve)}\le q_i^{\beta/\ve}.
\]
Let $r_i$ be an integer between $q_i^{(1-\beta-\ve)/\ve}$ and $2q_i^{(1-\beta-\ve)/\ve}$. Let $k_i = r_i q_i$ and $n_i = a_i k_i=a_i r_i q_i = r_i m_{i-1}$. If we carry out the argument of Section~\ref{012} with $q_i, r_i, k_i, a_i, n_i$ in place of $q,r,k,a,n$, then, since $q_i \ge q(\delta)$, as before we arrive at the inequality 
\[
f(n_i) \ge (1-\delta) r_i f(n_i/r_i) = (1-\delta) r_i f(m_{i-1}). 
\]
Define $m_i := n_i$. Then the above inequality shows that
\begin{equation}\label{mi}
\frac{f(m_i)}{m_i} \ge (1-\delta) \frac{f(m_{i-1})}{m_{i-1}}. 
\end{equation}
Note that since $r_i$ is a positive integer and $m_i = r_i m_{i-1}$, therefore $m_i \ge m_{i-1}$. In particular, \eqref{qq} is satisfied with $m_i$ in place of $m_{i-1}$. This allows us to carry on the inductive construction such that \eqref{mi} is satisfied for each~$i$.

Now, the above construction shows that if the initial $q$ was chosen large enough, then for each $i$,
\[
m_i=r_i m_{i-1} \ge q_i^{(1-\beta-\ve)/\ve} m_{i-1}\ge m_{i-1}^{1/(\beta+\ve)}. 
\]
Therefore, for all $i\ge 2$, 
\[
m_i \ge  m_1^{(\beta+\ve)^{-(i-1)}}. 
\]
So, by \eqref{bksineq}, there exists a constant $C_3$ such that
\[
\frac{f(m_i)}{m_i}\le \frac{C}{\log m_i} \le C_3(\beta+\ve)^{i-1}. 
\]
However, \eqref{mi} shows that there is $C_4 >0$ such that
\[
\frac{f(m_i)}{m_i}\ge C_4(1-\delta)^{i-1}. 
\]
Since $1-\delta > \beta+\ve$, we get a contradiction for sufficiently large $i$.

\section{Proof of $\chi \le 2\xi - 1$ when $\chi = 0$}\label{hard3}
As usual, we prove by contradiction. Assume that $\chi = 0$ and $2\xi -1 < \chi$. Then $\xi < 1/2$. Choose $\xi_1$, $\xi'$ and $\xi''$ such that $\xi < \xi_1 < \xi'' < \xi' < 1/2$. From this point on, however, the proof is quite different than the case $\chi > 0$. Recall that $t(P)$ is the sum of edge-weights of a path $P$ in the environment $t=(t_e)_{e\in E(\zz^d)}$. This notation is used several times in this section. First, we need a simple lemma about the norm $g$.
\begin{lmm}\label{glmm}
Assume that the edge-weight distribution is continuous, and let $L$ denote the infimum of its support. Then there exists $M > L$ such that for all $x\in \rr^d \backslash\{0\}$, $g(x) \ge M|x|_1$, where $|x|_1$ is the $\ell_1$ norm of $x$. 
\end{lmm}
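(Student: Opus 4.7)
The plan is to show that along any sufficiently long self-avoiding path from the origin, a positive fraction of edges carry weight strictly above $L$, with overwhelming probability; this will force $g$ to dominate $L|\cdot|_1$ strictly. Since the edge-weight distribution is continuous, $\pp(t_e \le L + \ep) \ra 0$ as $\ep \ra 0^+$, so I would first fix $\ep > 0$ small enough that $q := \pp(t_e \le L + \ep) < 1/(2d)$. Call an edge \emph{cheap} if $t_e \le L + \ep$ and \emph{expensive} otherwise; each edge is independently expensive with probability $1-q$.

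Next, pick $k \in (0, 1-q)$ small enough that the Cram\'er rate $I(k; 1-q) := k \log(k/(1-q)) + (1-k)\log((1-k)/q)$ exceeds $\log(2d)$. This is possible because $I(k;1-q) \ra \log(1/q)$ as $k \ra 0^+$, and $\log(1/q) > \log(2d)$ by our choice of $\ep$. For each integer $m \ge 1$, let $B_m$ be the event that every self-avoiding path from the origin of length $N \ge m$ contains at least $kN$ expensive edges. For a fixed such path, the number of expensive edges is binomial with parameters $N$ and $1-q$, so the standard Chernoff bound yields that this count is less than $kN$ with probability at most $e^{-N I(k; 1-q)}$. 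Since the number of self-avoiding paths of length $N$ starting at the origin is bounded by $(2d)^N$, a union bound gives $\pp(B_m^c) \le C_1 e^{-c_1 m}$ for some constants $C_1, c_1 > 0$.

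On the event $B_{|v|_1}$, any self-avoiding path $P$ from $0$ to $v$ has length $N \ge |v|_1$ with at least $kN$ expensive edges, so
\[
t(P) \ge L N + \ep k N = (L + \ep k) N \ge (L + \ep k)|v|_1,
\]
and hence $T(0,v) \ge (L + \ep k)|v|_1$ on $B_{|v|_1}$. Combined with the deterministic bound $T(0,v) \ge L|v|_1$ (valid because $L \ge 0$) and taking expectations,
\[
h(v) \ge L|v|_1 + \ep k |v|_1 \, \pp(B_{|v|_1}) \ge (L + \ep k)|v|_1 - C_2
\]
for a constant $C_2$ once $|v|_1$ is large. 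Letting $v = nx$ with $x \in \zz^d \backslash \{0\}$ fixed and $n \ra \infty$ gives $g(x) = \lim h(nx)/n \ge (L + \ep k)|x|_1$, and the inequality extends by positive homogeneity to $\mathbb{Q}^d$ and by continuity to all of $\rr^d \backslash \{0\}$. Setting $M := L + \ep k$ completes the argument.

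The main obstacle, which dictates the order of the choices above, is arranging the large-deviations rate $I(k; 1-q)$ to dominate the combinatorial entropy $\log(2d)$ of self-avoiding walks; continuity of the distribution is exactly what lets us first squeeze $q$ below $1/(2d)$, after which $k$ can be shrunk to achieve the required inequality. Once that is in place, the argument is essentially a Peierls-type union bound over self-avoiding paths, followed by passage to the subadditive limit.
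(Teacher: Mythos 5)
Your proof is correct, and it takes a genuinely different route from the paper. The paper's argument is soft: it notes that $M:=\inf_{x\ne 0} g(x)/|x|_1$ is positive and attained (since $g$ is a norm), then subtracts $L$ from every edge-weight, observes that for the shifted weights $s_e=t_e-L$ one has $T^s(y,z)\le T(y,z)-L|z-y|_1$ and hence $g^s(x)\le g(x)-L|x|_1=(M-L)|x|_1$ at a minimizing $x$, and finally invokes Kesten's theorem that the time constant of the shifted model is strictly positive because $\pp(s_e=0)=\pp(t_e=L)=0$; this forces $M>L$. You instead give a self-contained, quantitative Peierls-type argument: continuity lets you push $q=\pp(t_e\le L+\ep)$ below $1/(2d)$, a Chernoff bound beats the $(2d)^N$ entropy of self-avoiding walks, and on the resulting high-probability event every path from $0$ to $v$ costs at least $(L+\ep k)|v|_1$, which survives the subadditive limit to give $g(x)\ge (L+\ep k)|x|_1$. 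What the paper's route buys is brevity and the reuse of a standard black box (positivity of the time constant under no atom at the support's infimum); what yours buys is an explicit value $M=L+\ep k$, no appeal to Kesten's positivity result or to attainment of the infimum, and in effect a direct reproof of the needed special case of that result (the crude $q<1/(2d)$ threshold suffices here precisely because continuity makes the mass near $L$ arbitrarily small, so you do not need the sharper $p_c$ criterion). All the individual steps — the binomial lower-tail bound with rate $I(k;1-q)\to\log(1/q)$ as $k\to 0^+$, the union bound over paths of length $N\ge m$, the bound $t(P)\ge LN+\ep kN\ge (L+\ep k)|v|_1$, the passage from $T$ to $h$ and then to $g$, and the extension to $\rr^d$ by homogeneity and continuity — check out.
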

\begin{proof}
Since $g$ is a norm on $\rr^d$, 
\[
M := \inf_{x\ne 0} \frac{g(x)}{|x|_1} > 0, 
\]
and the infimum is attained.  Choose $x\ne 0$ such that $g(x)=M|x|_1$. Define a new set of edge-weights $s_e$ as $s_e := t_e - L$. Then $s_e$ are non-negative and i.i.d. Let the function $g^s$ be defined for these new edge-weights the same way $g$ was defined for the old weights. Similarly, define $h^s$ and~$T^s$. Since any path $P$ from a point $y$ to a point $z$ must have at least $|z-y|_1$ many edges, therefore $s(P) \le t(P)-L|z-y|_1$. Thus, 
\[
T^s(y,z) \le T(y,z) - L|z-y|_1.
\]
In particular, $h^s(y) \le h(y) - L|y|_1$ for any $y$. Considering a sequence $y_n$ in $\zz^d$ such that $y_n/n \ra x$, we see that 
\begin{align*}
g^s(x) &= \lim_{n\ra\infty} \frac{h^s(y_n)}{n} \le \lim_{n\ra\infty} \frac{h(y_n)-L|y_n|_1}{n} \\
&= g(x)-L|x|_1 = (M-L)|x|_1.
\end{align*}
Since $t_e$ has a continuous distribution, $s_e$ has no mass at $0$. Therefore, by a well-known result (see~\cite{kesten86}), $g^s(x) > 0$. This shows that $M > L$. 
\end{proof}

Choose $\beta$, $\ve'$ and $\ve$ so small that $0< \ve' < \ve < \beta < (\xi''-\xi_1)/d$. Choose $x_0$ and $H_0$ as in Proposition \ref{curve}. Let $n$ be a positive integer, to be chosen arbitrarily large at the end of the proof. Again, as usual, $C$ denotes any positive constant that does not depend on our choice of $n$. 

Choose a point $z\in H_0$ such that $|z|\in [n^{\xi'}, 2n^{\xi'}]$. Let $v:= nx_0/2 + z$. Then by Proposition \ref{curve} and the fact that $\xi' < 1/2$, 
\begin{equation}\label{gvv1}
\begin{split}
|g(v) - g(nx_0/2)| &= (n/2) |g(x_0 + 2z/n)-g(x_0)|\\
&\le C|z|^2/n\le Cn^{2\xi' -1}\le C.  
\end{split}
\end{equation}
Similarly,
\begin{equation}\label{gvv2}
|g(nx_0-v) - g(nx_0/2)|\le Cn^{2\xi'-1}\le C. 
\end{equation}
Let $w$ be the closest lattice point to $v$ and let $y$ be the closest lattice point to~$nx_0$. Then $|w-v|$ and $|y-nx_0|$ are bounded by $\sqrt{d}$. Therefore inequalities~\eqref{gvv1} and~\eqref{gvv2} imply that 
\begin{equation}\label{gvv3}
|g(y)-(g(w)+g(y-w))| \le C. 
\end{equation}
Figure \ref{figvg} has an illustration of the relative locations of $y$ and $w$, together with some other objects that will be defined below.

By Theorem \ref{gapthm} and the assumption that $\chi = 0$, $|h(y)-g(y)|$, $|h(w)-g(w)|$ and $|h(y-w)-g(y-w)|$ are all bounded by $Cn^{\ve}$. Again by~\eqref{up1} of Theorem \ref{kpzthm} and the assumption that $\chi=0$,  the probabilities $\pp(|T(0,w) - h(w)| > n^{\ve})$, $\pp(|T(w,y)- h(y-w)|>n^\ve)$ and $\pp(|T(0,y) - h(y)| >  n^\ve)$ are all bounded by $e^{-Cn^{\ve-\ve'}}$. These observations, together with~\eqref{gvv3}, imply that there are constants $C_1$ and $C_2$, independent of our choice of $n$, such that 
\begin{equation}\label{ttt}
\pp(|T(0,y) - (T(0,w) + T(w, y))| > C_1n^\ve) \le e^{-C_2n^{\ve-\ve'}}. 
\end{equation}
Let $T_o(0,y)$ be the minimum passage time from $0$ to $y$ among all paths that do not deviate by more than $n^{\xi''}$ from the straight line segment joining $0$ and~$y$.  By assumption \eqref{up2} of Theorem~\ref{kpzthm}, 
\[
\pp(T_o(0,y) = T(0,y)) \ge 1- e^{-Cn^{\xi''-\xi_1}}. 
\]
Combining this with \eqref{ttt}, we see that if $E_1$ is the event 
\begin{equation}\label{e1def}
E_1 := \{|T_o(0,y) - (T(0,w) + T(w, y))| \le C_1n^\ve\},
\end{equation}
where $C_1$ is the constant from \eqref{ttt}, then there is a constant $C_3$ such that
\begin{equation}\label{e1}
\pp(E_1) \ge 1- e^{-C_3n^{\xi''-\xi_1}} - e^{-C_3n^{\ve-\ve'}}.
\end{equation}
Let $V$ be the set of all lattice points within $\ell_1$ distance $n^\beta$ from $w$. Let $\partial V$ denote the boundary of $V$ in $\zz^d$, that is, all points in $V$ that have at least one neighbor outside of $V$.
Let $w_1$ be the first point in $G(0,w)$ that belongs to $\partial V$, when the points are arranged in a sequence from $0$ to $w$. Let $w_2$ be the last point in $G(w,y)$ that belongs to $\partial V$, when the points are arranged in a sequence from $w$ to $y$. 
Let $G_1$ denote the portion of $G(0,w)$ connecting $w_1$ and $w$, and let  $G_2$ denote the portion of $G(w,y)$ connecting $w$ and $w_2$. Let $G_0$ be the portion of $G(0,w)$ from $0$ to $w_1$ and let $G_3$ be the portion of $G(w, y)$ from $w_2$ to $y$. Note that $G_0$ and $G_3$ lie entirely outside of $V$. Figure~\ref{figvg} provides a schematic diagram to illustrate the above definitions.

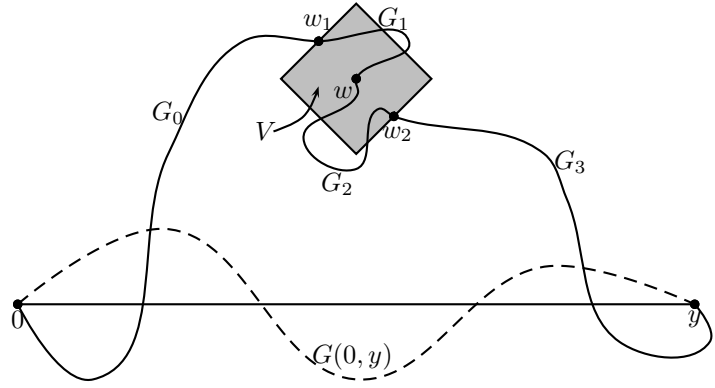
\begin{figure}[h]
\begin{pdfpic}
\begin{pspicture}(-1,-1)(10,5)
\psset{xunit=1cm,yunit=1cm}
\rput(0,-.2){\small $0$}
\rput(9,-.2){\small $y$}
\psline{*-*}(0,0)(9,0)
\pspolygon[fillstyle=solid, fillcolor=lightgray](4.5,4)(5.5,3)(4.5,2)(3.5,3)
\rput(4.33,2.82){\small $w$}
\psline{*-*}(4.5,3)(4.5,3)
\pscurve{*-*}(0,0)(1,-1)(2,2)(3,3.5)(4, 3.5)(5.2, 3.5)(4.5,3)(4.52,2.8)(3.8, 2.2)(4.5, 1.8)(4.8,2.6)(5,2.5)(7, 2)(7.3, 1.4)(8, -.5)(9.2,-.5)(9,0)
\psline{*-*}(4,3.5)(4,3.5)
\psline{*-*}(5,2.5)(5,2.5)
\rput(4,3.75){\small $w_1$}
\rput(5.05,2.27){\small $w_2$}
\rput(2,2.54){\small $G_0$}
\rput(5,3.8){\small $G_1$}
\rput(4.25, 1.6){\small $G_2$}
\rput(7.35, 1.9){\small $G_3$}
\pscurve{->}(3.4,2.3)(3.8, 2.5)(4.0,2.9)
\rput(3.3,2.3){\small $V$}
\pscurve[linestyle=dashed]{-}(0,0)(2,1)(4.5,-1)(7,.5)(9,0)
\rput(4.45,-.7){\small $G(0,y)$}
\end{pspicture}
\end{pdfpic}
\caption{Schematic diagram for $V, w, w_1,w_2$ and $G_0, G_1, G_2, G_3$.}
\label{figvg}
\end{figure}

Let $L$ and $M$ be as in Lemma \ref{glmm}. Choose $L', M'$ such that $L < L' < M' <M$. Take any $u\in \partial V$. By Lemma \ref{glmm}, $g(u-w) \ge M|u-w|_1$. Therefore by Theorem \ref{gapthm}, 
\[
h(u-w) \ge M|u-w|_1 - C|u-w|^\ve\ge M|u-w|_1 - Cn^{\beta \ve}. 
\]
Now, $|u-w|_1 \ge Cn^\beta$. Therefore by assumption \eqref{up1} of Theorem \ref{kpzthm} and the above inequality, 
\begin{align*}
&\pp(T(u,w) < M'|u-w|_1) \\
&\le \pp(|T(u,w) - h(u-w)| > (M-M')|u-w|_1 - Cn^{\beta \ve})\\
&\le \pp(|T(u,w) - h(u-w)| > Cn^{\beta})\le e^{-n^{\beta-\ve'}/C}.
\end{align*} 
Since there are at most $n^C$ points in $\partial V$, the above bound shows that 
\[
\pp(T(u, w) < M'|u-w|_1 \text{ for some } u\in \partial V) \le n^C e^{-n^{\beta-\ve'}/C}.
\]
In particular, if $E_2$ and $E_3$ are the events 
\begin{align*}
E_2 &:= \{t(G_1) \ge M'|w-w_1|_1\},\\
E_3 &:= \{t(G_2) \ge M'|w-w_2|_1\},
\end{align*}
then there is a constant $C_4$ such that 
\begin{align}
\pp(E_2\cap E_3) &\ge 1- n^{C_4}e^{-n^{\beta-\ve'}/C_4}. \label{te}
\end{align}
Let $E(V)$ denote the set of edges between members of $V$. Let $(t'_e)_{e\in E(V)}$ be a collection of i.i.d.\ random variables, independent of the original edge-weights, but having the same distribution. For $e\not\in E(V)$, let $t'_e := t_e$. Let $E_4$ be the event 
\[
E_4 := \{t_e' \le L' \text{ for each } e\in E(V)\}.
\]
If $E_4$ happens, then there is a path $P_1$ from $w_1$ to $w$ and  a path $P_2$ from $w$ to $w_2$ such that $t'(P_1) \le L'|w-w_1|_1$ and $t'(P_2)\le L'|w-w_2|_1$.  Let $P$ be the concatenation of the paths $G_0$, $P_1$, $P_2$ and $G_3$. Since $t'(G_0)=t(G_0)$ and $t'(G_3)=t(G_3)$, therefore under $E_4$,
\[
t'(P) \le t(G_0)+t(G_3) + L'|w-w_1|_1+L'|w-w_2|_1. 
\]
On the other hand, under $E_2\cap E_3$, 
\begin{align*}
T(0,w)+T(w,y) &= t(G_0)+t(G_1)+t(G_2)+t(G_3) \\
&\ge t(G_0)+t(G_3) + M'|w-w_1|_1 + M'|w-w_2|_1. 
\end{align*}
Consequently, if $E_1, E_2, E_3, E_4$ all happen simultaneously, then there is a (deterministic) positive constant $C_5$ such that
\[
T_o(0,y) \ge t'(P) + C_5 n^\beta - C_1 n^\ve,  
\]
where $C_1$ is the constant in the definition \eqref{e1def} of $E_1$. Since $\beta < \xi'' < \xi'$ and $x_0\not\in H_0$, the edges within distance $n^{\xi''}$ of the line segment joining $0$ and $y$ have the same weights in the environment $t'$ as in $t$. Since $\beta > \ve$, this observation and  the above display proves that $E_1\cap E_2\cap E_3 \cap E_4$ implies $D'(0,y) \ge n^{\xi''}$, where $D'(0,y)$ is the value of $D(0,y)$ in the new environment~$t'$. (To put it differently, if $E_1\cap E_2 \cap E_3 \cap E_4$ happens then there is a path $P$ that has less $t'$-weight than the least $t'$-weight path within distance $n^{\xi''}$ of the straight line connecting $0$ to $y$, and therefore $D'(0,y)$ must be greater than or equal to $n^{\xi''}$.)

Now note that the event $E_4$ is independent of $E_1$, $E_2$ and $E_3$. Moreover, since $L'>L$, there is a constant $C_6$ such that $\pp(E_4) \ge e^{-C_6 n^{\beta d}}$. Combining this with \eqref{e1}, \eqref{te} and the last observation from the previous paragraph, we get
\begin{align*}
\pp(D'(0,y)\ge n^{\xi''}) &\ge \pp(E_1\cap E_2\cap E_3 \cap E_4)\\
&= \pp(E_1\cap E_2\cap E_3)\pp(E_4)\\
&\ge (1-e^{-C_3 n^{\xi''-\xi_1}} - e^{-C_3 n^{\ve-\ve'}} - n^{C_4}e^{-n^{\beta-\ve'}/C_4}) e^{-C_6 n^{\beta d}}\\
&\ge e^{-C_7 n^{\beta d}}.  
\end{align*}
Now $D'(0,y)$ has the same distribution as $D(0,y)$. But by \eqref{up2} of Theorem~\ref{kpzthm}, $\pp(D(0,y) \ge n^{\xi''}) \le e^{-C_8n^{\xi''-\xi_1}}$, and $\beta d < \xi''-\xi_1$ by our choice of $\beta$. Together with the above display, this gives a contradiction, thereby proving that $\chi \le 2\xi -1$ when $\chi = 0$. 

\vskip.2in

\noindent {\bf Acknowledgments.} I would like to thank Tom LaGatta, Partha Dey, Elena Kosygina, Alain Sznitman, Rapha\"el Rossignol and Rongfeng Sun for useful discussions and references. I would also like to specially thank one of  the referees for a number of very useful suggestions and pointing out some errors in the first draft.

\end{document}